\documentclass[reqno,11pt]{amsart}

\usepackage{bbm}
\usepackage{amssymb,mathrsfs,amsfonts,paralist}
\usepackage[colorlinks=false,pdfborder={0 0 0}]{hyperref}
\usepackage[latin1]{inputenc}

\textheight=22.9cm
\textwidth=6.46 true in
\marginparsep=0cm
\oddsidemargin=0.0cm
\evensidemargin=0.0cm
\headheight=13pt
\headsep=0.8cm
\parskip=0pt
\hfuzz=6pt
\parindent=0pt
\voffset=-0.7in

\newtheorem{theorem}{Theorem}[section]
\newtheorem{coro}[theorem]{Corollary}
\newtheorem{lemma}[theorem]{Lemma}
\newtheorem{fact}[theorem]{Fact}

\theoremstyle{definition}
\newtheorem{definition}[theorem]{Definition}
\newtheorem{notation}[theorem]{Notation}
\newtheorem{remark}[theorem]{Remark}

\newcommand\cf{\mathbbm{1}}
\newcommand\C{\mathbb{C}}
\newcommand\E{\mathcal{E}}
\newcommand\N{\mathbb{N}}
\renewcommand\P{\mathbb{P}}
\newcommand\R{\mathbb{R}}
\newcommand\Z{\mathbf{Z}}
\renewcommand\a{\mathfrak{a}}
\renewcommand\b{\mathfrak{b}}
\renewcommand\c{\mathfrak{c}}
\newcommand\eps{\varepsilon}
\newcommand\vrh{\varrho}
\newcommand\Om{\Omega}
\newcommand\om{\omega}
\newcommand\wt{\widetilde}
\newcommand\la{\lambda}
\newcommand\al{\alpha}
\newcommand\ga{\gamma}
\newcommand\Ga{\Gamma}
\newcommand\si{\sigma}
\newcommand\Si{\Sigma}
\newcommand\del{\delta}
\newcommand\leqC{\lesssim}

\newcommand\dom{\mathcal{D}}
\newcommand\ov{\overline}
\newcommand{\supp}{\operatorname{supp}}
\renewcommand{\Re}{\operatorname{Re}}

\newcommand{\rot}{\operatorname{rot}}
\renewcommand{\div}{\operatorname{div}}
\newcommand{\grad}{\nabla}

\begin{document}

\title{$L^p$-spectral multipliers for some elliptic systems}

\author{Peer Christian Kunstmann}
\address{Department of Mathematics, Karlsruhe Institute of Technology (KIT),
Kaiserstr.\ 89, 76128 Karlsruhe, Germany}
\email{\href{mailto:peer.kunstmann@kit.edu}{peer.kunstmann@kit.edu}, \href{mailto:matthias.uhl@kit.edu}{matthias.uhl@kit.edu}}
\author{Matthias Uhl}

\subjclass[2010]{35J47, 42B15, 47A60.}
\keywords{spectral multipliers, Maxwell operator, Stokes operator,
Lam\'e system, Lipschitz domains, Hodge boundary conditions.}

\begin{abstract}
We show results on $L^p$-spectral multipliers for Maxwell operators
with bounded measurable coefficients. We also present similar results
for the Stokes operator with Hodge boundary conditions and the Lam\'e 
system. Here we rely on resolvent estimates established recently by 
M.\ Mitrea and S.\ Monniaux.
\end{abstract}

\maketitle

\numberwithin{equation}{section}

\section{Introduction}

For self-adjoint operators $A\ge0$ in a Hilbert space $H$, the
spectral theorem establishes a functional calculus for bounded Borel
measurable functions $F\colon[0,\infty)\to\C$. This property is crucial
in countless applications in mathematical physics. In particular in the
context of non-linear phenomena one studies differential
operators and associated semigroup or resolvent operators also in
spaces $L^p$ for $p\neq 2$. In this context, the holomorphic
$H^\infty$-functional calculus, i.e.\ a functional calculus for bounded
holomorphic functions on a complex sector symmetric to the real half
line, has turned out to be a very useful tool. But if the operator is
self-adjoint in $L^2$ it might have a better functional calculus in
$L^p$ for $p\neq2$ for appropriate functions $F\colon[0,\infty)\to\C$. 

The classical result in this field is H\"ormander's spectral
multiplier theorem for $A=-\Delta$ on $\R^D$ (1960), cf.\ Theorem
\ref{SpecMultHor} below.
Various generalizations of this result have been given since then, in
several directions. Quite recently, considerable progress has been
made (\cite{COSY,DY,Dzi2,KriPre,KunUhl}) concerning operators for 
which the associated
semigroups satisfy generalized Gaussian bounds or Davies-Gaffney
estimates (cf.\ Section 2 for more details). In this paper we show that
these results can be applied to several elliptic systems, namely the
Maxwell operator, the Stokes operator with Hodge boundary conditions,
and the Lam\'e system. 

The Maxwell operator is of great importance in the studies of
electrodynamics. Following the outline in \cite[Chapter
6]{ColtonKress}, we briefly explain how an interest in its spectral
properties arises. The Maxwell equations
\begin{align*}
\rot\mathcal E+\partial_t\mathcal H=0\,,\qquad
\rot\mathcal H-\eps(\cdot)\partial_t\mathcal E=0\,,\qquad
\div\mathcal H=0\qquad\mbox{in }\Om
\end{align*}
govern the propagation of electromagnetic waves in a region
$\Om\subset\R^3$. Here, $\mathcal E\colon\Om\times\R\to\R^3$ and
$\mathcal H\colon\Om\times\R\to\R^3$ denote the electric and
magnetic field, respectively, whereas the matrix-valued function
$\eps(\cdot)\colon\Om\to\R^{3\times3}$ describes the electric
permittivity. The magnetic permeability was taken to be the identity
matrix and the electric conductivity to be zero. We take perfect
conductor boundary conditions
$$
\nu\times\mathcal E=0\,,\qquad
\nu\cdot\mathcal H=0\qquad \mbox{on }\partial\Om\,.
$$
If the waves behave time periodically with respect to the same
frequency $\om>0$, the ansatz $\mathcal E(x,t)=e^{-i\om t}E(x)$ and
$\mathcal H(x,t)=e^{-i\om t}H(x)$ leads to the {\em
time-harmonic Maxwell equations} $\rot E-i\om H=0$ and $\rot
H+i\om\eps(\cdot) E=0$. Elimination of $E$ finally yields
\begin{align*}
\rot\eps(\cdot)^{-1}\rot H-\om^2H&=0\qquad\mbox{in }\Om\,,\\
\div H&=0\qquad\mbox{in }\Om\,,\\
\nu\cdot H&=0\qquad\mbox{on }\partial\Om\,,\\
\nu\times\eps(\cdot)^{-1}\rot H&=0\qquad\mbox{on }\partial\Om\,.
\end{align*}
The operator $\rot\eps(\cdot)^{-1}\rot$ is what we call the 
\emph{Maxwell operator} and we shall study it in the following
setting. Let $\Om$ be a bounded
Lipschitz domain in $\R^3$ and $\eps(\cdot)\in
L^\infty(\Om,\C^{3\times3})$ be a matrix-valued function such that
$\eps(\cdot)^{-1}\in L^\infty(\Om,\C^{3\times3})$ and
$\eps(x)\in\C^{3\times3}$ is a positive definite, hermitian matrix
for almost all $x\in\Om$. We emphasize that no additional regularity
assumptions on $\eps(\cdot)$ are made. This is of interest in solid
state physics, e.g.\ for photonic crystals. Inspired by the approach in
\cite{MiMo}, we consider in $L^2(\Om,\C^3)$ the operator $A_2$
which is associated with the densely defined, sesquilinear form
\begin{align*}
\a(u,v):=\int_\Om\eps(\cdot)^{-1}\rot u\cdot\ov{\rot v}\,dx+
\int_\Om\div u~\ov{\div v}\,dx\qquad(u,v\in\dom(\a)),
\end{align*}
where $\dom(\a):=\{u\in L^2(\Om,\C^3)\,:\,\div u\in L^2(\Om,\C),
\,\rot u\in L^2(\Om,\C^3),\,\nu\cdot u|_{\partial\Om}=0\}$. Here and
in the following, $\nu(x)$ denotes the outer normal at a point $x$
of the boundary $\partial\Om$ and the operators $\div$ and $\rot$
are defined in the distributional sense (cf., e.g.\ \cite{Amrouche}).

The main task in order to apply the recent results on spectral multipliers
mentioned above is to establish generalized Gaussian estimates for the
semigroup $(e^{-tA_2})_{t>0}$ associated with the operator $A_2$ (cf.\ Theorem
\ref{GGE-Hodge}). We do this via Davies' perturbation method, 
and we thus obtain that a spectral multiplier theorem holds for
$A_2$ (cf.\ Theorem \ref{SpecMultHodgeLaplace}). We define the
Maxwell operator $M_2$ as the restriction of $A_2$ to the space of
divergence-free vector fields. Since the Helmholtz projection and $A_2$
are commuting (cf.\ Lemma \ref{HHPro-komm}), many properties of $A_2$
can be transferred to the Maxwell operator $M_2$. This includes in
particular the validity of the spectral multiplier theorem (cf.\
Theorem~\ref{SpecMultMaxwell}). 

Besides the Maxwell operator we study the Stokes operator with Hodge
boundary conditions in bounded Lipschitz domains via results from
\cite{MiMo}. Actually, this operator corresponds to the special case of
$\eps(x)$ being the identity matrix for every $x\in\Om$. Then the operator
$A_2$ equals the \emph{Hodge-Laplacian} (observe that \cite{MiMo} also
studied a Maxwell operator, but that this is different from ours).
M.\ Mitrea and S.\ Monniaux (\cite{MiMo}) proved that $A_2$ is then given by
\begin{align*}
\dom(A_2)&=\bigl\{u\in V(\Om)\,:\,\rot\rot u\in L^2(\Om,\C^3),\, \div u\in
H^1(\Om,\C),\, \nu\times\rot u|_{\partial\Om}=0\bigr\},
\\
A_2u&=\rot\rot u-\grad\div u=-\Delta u\qquad\mbox{for }
u\in\dom(A_2)
\end{align*}
and that $-A_2$ generates an analytic semigroup on $L^p(\Om,\C^3)$
for all $p\in(p_\Om,p_\Om')$, where $p_\Om'>3$ and
$1/p_\Om+1/p_\Om'=1$. As a consequence, they obtained that (minus)
the Stokes operator with Hodge boundary conditions, which is defined
as the restriction of the Hodge-Laplacian on the space of
divergence-free vector fields, also generates an analytic semigroup on
$L^p(\Om,\C^3)$ for all $p\in(p_\Om,p_\Om')$.
We show that even a spectral multiplier theorem holds for the Stokes
operator with Hodge boundary conditions (cf.\ Theorem
\ref{SpecMultStokes}). Our arguments rely on the proof of M.\ Mitrea
and S.\ Monniaux in which certain two-ball estimates for the
resolvents of the Hodge-Laplacian were verified. We shall prove that
these kinds of bounds entail generalized Gaussian estimates for the
corresponding semigroup operators (cf.\ Lemma \ref{GGE-SG-Res}) and
thus the same reasoning as for the Maxwell operator is possible for
getting Theorem \ref{SpecMultStokes}.

Finally, by using a similar approach based on \cite{MiMoLame}, we
verify generalized Gaussian estimates for the time-dependent Lam\'e
system equipped with homogeneous Dirichlet boundary conditions. Thus
we obtain a spectral multiplier theorem for the Lam\'e system (cf.\
Theorem \ref{SpecMultLame}).

Let us mention that the generalized Gaussian estimates we establish
for the elliptic systems in this paper have other consequences that
have not been mentioned in the literature so far. Application of a
result from \cite{BK2} yields boundedness of $H^\infty$-functional calculus
in the stated range of $L^p$-spaces. Of course, this weaker assertion
follows also from the results on spectral multipliers of the present
paper. Due to \cite[Corollary 1.5]{BKLeg} (one could also use results
due to W. Arendt or E.B. Davies), the spectrum of these operators in
$L^p$ does not depend on $p$ for the stated range of $L^p$-spaces.
Finally we note that, in general, pointwise Gaussian kernel estimates
for all the above operators fail.

\smallskip
Throughout this article, we make use of the following notation. For
$p\in[1,\infty]$ the conjugate exponent $p'$ is defined by
$1/p+1/p'=1$ with the usual convention $1/\infty:=0$. In the proofs,
the letters $b,C$ denote generic positive constants that are
independent of the relevant parameters involved in the estimates and
may take different values at different occurrences. We will often
use the notation $a\leqC b$ if there exists a constant $C>0$ such
that $a\leq Cb$ for two non-negative expressions $a,b$; $a\cong b$
stands for the validity of $a\leqC b$ and $b\leqC a$. Moreover, the
notation $|E|$ for a Lebesgue measurable subset $E$ of $\R^D$ stands
for the $D$-dimensional Lebesgue measure of $E$.

\section{Spectral multiplier theorems}

In this section we quote and discuss results on spectral multipliers. 
Let $(X,d,\mu)$ be a {\em space of homogeneous type} in
the sense of Coifman and Weiss, i.e.\ $(X,d)$ is a non-empty metric
space endowed with a $\sigma$-finite regular Borel measure $\mu$
with $\mu(X)>0$ which satisfies the so-called {\em doubling
condition}, that is, there exists a constant $C>0$ such that for all
$x\in X$ and all $r>0$
\begin{align}\label{doubling}
\mu(B(x,2r))\leq C\,\mu(B(x,r))\,,
\end{align}
where $B(x,r):=\{y\in X:\,d(y,x)<r\}$. It is easy to see that the
doubling condition \eqref{doubling} entails the {\em strong
homogeneity property}, i.e.\ the existence of constants $C,D>0$ such
that for all $x\in X$, all $r>0$, and all $\la\geq1$
\begin{align}\label{doublingDim}
\mu(B(x,\la r))\leq C\la^D\mu(B(x,r))\,.
\end{align}
In the sequel the value $D$ always refers to the constant in
(\ref{doublingDim}) which will be also called {\em dimension} of
$(X,d,\mu)$. Of course, $D$ is not uniquely determined.

There is a multitude of examples of spaces of homogeneous type. The
simplest one is the Euclidean space $\R^D$, $D\in\N$, equipped with
the Euclidean metric and Lebesgue measure. Bounded open subsets
of $\R^D$ with Lipschitz boundary endowed with the Euclidean metric
and Lebesgue measure form also spaces of homogeneous type (with
$\mu(B(x,r))\cong r^D$). More general definitions of spaces of 
homogeneous type can be found in \cite[Chapitre III.1]{CW} or in 
\cite[Section I.1.2]{Steinbook}.

\smallskip
Let $A$ be a non-negative, self-adjoint operator on the Hilbert
space $L^2(X)$. If $E_A$ denotes the resolution of the identity
associated with $A$, the spectral theorem asserts that the operator
\begin{align*}
F(A):=\int_0^\infty F(\la)\,dE_A(\la)
\end{align*}
is well defined and acts as a bounded linear operator on $L^2(X)$
whenever $F\colon[0,\infty)\to\C$ is a bounded Borel function.
Spectral multiplier theorems provide regularity assumptions on $F$
which ensure that the operator $F(A)$ extends from $L^p(X)\cap
L^2(X)$ to a bounded linear operator on $L^p(X)$ for all $p$ ranging
in some interval $I\subset (1,\infty)$ containing $2$. 

In 1960, L.\ H{\"o}rmander addressed this question for the Laplacian
$A=-\Delta$ on $\R^D$ during his studies on the boundedness of
Fourier multipliers on $\R^D$. In order to formulate his famous
result, we fix once and for all a non-negative cut-off function
$\om\in C_c^\infty(0,\infty)$ such that
\begin{align*}
\supp\om\subset(1/4,1) \qquad\mbox{and}\qquad
\sum_{n\in\Z}\om(2^{-n}\la)=1\quad\mbox{for all $\la>0$}\,.
\end{align*}
\begin{theorem}\label{SpecMultHor}\cite[Theorem 2.5]{Hor60}
If $F\colon[0,\infty)\to\C$ is a bounded Borel function such that
$$
\sup_{n\in\Z}\|\om F(2^n\cdot)\|_{H_2^s}<\infty
$$
for some $s>D/2$, then $F(-\Delta)$ is a bounded linear operator on
$L^p(\R^D)$ for all $p\in(1,\infty)$, and one has 
$$
\|F(-\Delta)\|_{L^p\to L^p}
\leq
C_p\Bigl(\sup_{n\in\Z}\|\om F(2^n\cdot)\|_{H_2^s}+|F(0)|\Bigr)\,,
$$
where $C_p$ is a constant not depending on $F$.
\end{theorem}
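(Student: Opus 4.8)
The plan is to prove the theorem by Calder\'on--Zygmund theory. Note first that $F(-\Delta)$ is the Fourier multiplier operator $f\mapsto\mathcal F^{-1}\bigl(F(|\cdot|^2)\widehat f\bigr)$, i.e.\ convolution with the tempered distribution $K:=\mathcal F^{-1}\bigl(F(|\cdot|^2)\bigr)$. To apply the Calder\'on--Zygmund theorem one needs, with constants controlled by $\mathcal N:=\sup_{n\in\Z}\|\om F(2^n\cdot)\|_{H_2^s}$: (i) boundedness of $F(-\Delta)$ on $L^2(\R^D)$, and (ii) the H\"ormander integral condition $\sup_{y\neq0}\int_{|x|>2|y|}|K(x-y)-K(x)|\,dx\le C\mathcal N$ on the kernel. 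Fact (i) is immediate from the spectral theorem, which gives $\|F(-\Delta)\|_{L^2\to L^2}=\|F\|_{L^\infty(\sigma(-\Delta))}\le\|F\|_{L^\infty(0,\infty)}$; and since for each $\la>0$ at least one of the (at most three) nonzero numbers $\om(2^{-n}\la)$ is $\ge1/3$, the Sobolev embedding $H_2^s(\R)\hookrightarrow L^\infty(\R)$ (valid because $s>D/2\ge1/2$) yields $\|F\|_{L^\infty(0,\infty)}\le C\mathcal N$. (The value $F(0)$ plays no role, since $E_{-\Delta}(\{0\})=0$; it is kept in the statement only for uniformity with the operators studied later, which may have $0$ as an eigenvalue.)

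For fact (ii) I would use a Littlewood--Paley decomposition. Set $F_n:=\om(2^{-n}\cdot)F$, so that $F=\sum_{n\in\Z}F_n$ on $(0,\infty)$ (a locally finite sum) and hence $F(-\Delta)=\sum_nF_n(-\Delta)$ on $L^2$, and put $K_n:=\mathcal F^{-1}\bigl(F_n(|\cdot|^2)\bigr)$. Since $F_n(|\xi|^2)$ lives on the shell $\{|\xi|\cong2^{n/2}\}$, writing $R:=2^{n/2}$ we get $F_n(|\xi|^2)=g_n(\xi/R)$ with $g_n(\eta):=(\om F(2^n\cdot))(|\eta|^2)$ supported in the \emph{fixed} annulus $\{1/2\le|\eta|\le1\}$, and therefore $K_n(x)=R^Dh_n(Rx)$ with $h_n:=\mathcal F^{-1}g_n$. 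Because $\eta\mapsto|\eta|^2$ is a smooth submersion there, composition together with multiplication by a fixed $C_c^\infty$ cut-off supported near the annulus gives, uniformly in $n$, $\|g_n\|_{H_2^s(\R^D)}\le C\|\om F(2^n\cdot)\|_{H_2^s(\R)}\le C\mathcal N$, and likewise $\|\xi_jg_n\|_{H_2^s(\R^D)}\le C\mathcal N$ for $j=1,\dots,D$. This is the point where $s>D/2$ enters: then $\langle u\rangle^{-s}\in L^2(\R^D)$, so the elementary bound $\|\mathcal F^{-1}\varphi\|_{L^1}\le C\|\varphi\|_{H_2^s(\R^D)}$ (Cauchy--Schwarz against $\langle u\rangle^{-s}$) gives $\|h_n\|_{L^1}\le C\mathcal N$ and $\|\nabla h_n\|_{L^1}\le C\mathcal N$; moreover, fixing once and for all some $\del\in(0,s-D/2)$, a variant of the same argument (now with $|u|^\del\langle u\rangle^{-s}\in L^2$) gives $\int_{\R^D}|u|^\del|h_n(u)|\,du\le C\mathcal N$.

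With these estimates, the H\"ormander condition follows from the usual two-regime split. After the substitution $x\mapsto R^{-1}v$ one has $\int_{|x|>2|y|}|K_n(x-y)-K_n(x)|\,dx=\int_{|v|>2R|y|}|h_n(v-Ry)-h_n(v)|\,dv$. If $R|y|<1$, the mean value theorem and $\|\nabla h_n\|_{L^1}\le C\mathcal N$ bound this by $C\,R|y|\,\mathcal N$. If $R|y|\ge1$, then $|v-Ry|\ge|v|/2>R|y|$ on the domain of integration, so splitting the difference and estimating each of the two tails with the $|u|^\del$-weighted bound gives $C\,(R|y|)^{-\del}\mathcal N$. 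Summing over $n\in\Z$ reduces to summing $\min\bigl(2^{n/2}|y|,(2^{n/2}|y|)^{-\del}\bigr)$, a convergent series whose total is independent of $y$; this gives $\sup_{y\neq0}\int_{|x|>2|y|}|K(x-y)-K(x)|\,dx\le C\mathcal N$ with $K=\sum_nK_n$ (the interchange of the $x$-integral with the sum over $n$ being legitimate because each partial sum $\sum_{|n|\le M}K_n$ is a genuine $L^1$ convolution kernel and one may pass to the limit). Now the Calder\'on--Zygmund theorem yields that $F(-\Delta)$ is of weak type $(1,1)$ with norm $\le C\mathcal N$, so by Marcinkiewicz interpolation with the $L^2$ bound it is bounded on $L^p$ for $1<p\le2$; and since $-\Delta$ is self-adjoint, $F(-\Delta)^*=\ov F(-\Delta)$ satisfies the same hypothesis with the same $\mathcal N$, so duality gives the range $2\le p<\infty$, all with constants $C_p\mathcal N$.

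The one genuinely delicate point is to obtain the kernel estimates at the \emph{sharp} threshold $s>D/2$: working only with $\|h_n\|_{L^1}$ and $\|\nabla h_n\|_{L^1}$ (and $\langle u\rangle^{1-s}\in L^2$) would require $s>D/2+1$, so the fractional-weight bound $\int|u|^\del|h_n|\,du\le C\mathcal N$ with $0<\del<s-D/2$, used in the regime $R|y|\ge1$, is essential --- it is precisely what makes the series over $n$ summable. A subsidiary but not quite trivial point is the uniform-in-$n$ transfer $\|g_n\|_{H_2^s(\R^D)}\le C\|\om F(2^n\cdot)\|_{H_2^s(\R)}$, i.e.\ the boundedness on Bessel-potential spaces of composition with $\eta\mapsto|\eta|^2$ restricted to an annulus away from the origin; this is classical but should be stated with care.
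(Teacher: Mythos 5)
The paper does not prove this statement; it is quoted verbatim from H\"ormander's 1960 paper, so there is no in-text argument to compare against. Your proposal is a correct and complete proof along the classical lines (it is in essence H\"ormander's own strategy: $L^2$-boundedness from the spectral theorem plus a Calder\'on--Zygmund kernel estimate derived via Plancherel from the dyadic $H^s_2$ condition). The points you single out as delicate are exactly the right ones, and your treatment of them is sound: the transfer $\|(\om F(2^n\cdot))(|\cdot|^2)\|_{H^s_2(\R^D)}\lesssim\|\om F(2^n\cdot)\|_{H^s_2(\R)}$ via the submersion/tensor-product argument on a fixed annulus, and the fractional weight $\int|u|^\del|h_n|\,du\lesssim\mathcal N$ with $0<\del<s-D/2$, which is what makes the dyadic sum $\sum_n\min\bigl(2^{n/2}|y|,(2^{n/2}|y|)^{-\del}\bigr)$ converge at the sharp threshold. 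Two minor points worth writing out if this were to be made fully rigorous: (i) that $\sum_nK_n$ converges absolutely in $L^1_{\mathrm{loc}}(\R^D\setminus\{0\})$ and represents the distributional kernel of $F(-\Delta)$ off the origin --- this follows from $\|h_n\|_\infty\le\|g_n\|_{L^1}\lesssim\mathcal N$ on the low-frequency side and your $\del$-weighted tail bound on the high-frequency side; and (ii) the passage from the uniform weak $(1,1)$ bounds for the truncations $\sum_{|n|\le M}F_n(-\Delta)$ to the weak $(1,1)$ bound for $F(-\Delta)$ itself, via $L^2$-convergence and lower semicontinuity of the distribution function. Neither affects the validity of the argument.
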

H{\"o}rmander's multiplier theorem was generalized, on the one hand,
to other spaces than $\R^D$ and, on the other hand, to more general
operators than the Laplacian. 
G.\ Mauceri and S.\ Meda (\cite{MaMe}) and M.\ Christ
(\cite{Christ}) extended the result to homogeneous Laplacians on
stratified nilpotent Lie groups. Further generalizations were
obtained by G.\ Alexopoulos (\cite{Alexo}) who showed in the setting
of connected Lie groups of polynomial volume growth a corresponding
statement for the left invariant sub-Laplacian. This in turn was
extended by W.\ Hebisch (\cite{Hebisch}) to integral operators with
kernels decaying polynomially away from the diagonal.
The results in \cite{DOS} due to X.T.\ Duong, E.M.\ Ouhabaz, and A.\
Sikora marked an important step toward the study of more general
operators. In the abstract framework of spaces of homogeneous type
they investigated non-negative, self-adjoint operators $A$ on
$L^2(X)$ which satisfy {\em pointwise Gaussian estimates}, i.e.\ the 
semigroup $(e^{-tA})_{t>0}$ generated by $-A$ can be represented as 
integral operators
$$
e^{-tA}f(x)=\int_Xp_t(x,y)f(y)\,d\mu(y)
$$
for all $f\in L^2(X)$, $t>0$, $\mu$-a.e.\ $x\in X$ and the kernels
$p_t\colon X\times X\to\C$ enjoy the following pointwise upper bound
\begin{align}\label{GE}
|p_t(x,y)|\leq
C\,\mu(B(x,t^{1/2}))^{-1}\exp\biggl(-b\,\frac{d(x,y)^2}{t}\biggr)
\end{align}
for all $t>0$ and all $x,y\in X$, where $b,C>0$ are constants
independent of $t,x,y$. Under these hypotheses the operator $F(A)$
is of weak type $(1,1)$ whenever $F\colon[0,\infty)\to\C$ is a bounded
Borel function such that $\sup_{n\in\Z}\|\om F(2^n\cdot)\|_{C^s}
<\infty$ for some $s>D/2$ (cf.\ \cite[Theorem 3.1]{DOS}). Consequently,
$F(A)$ is then bounded on $L^p(X)$ for all $p\in(1,\infty)$.

Sometimes it is not clear whether, or even not true that, a
non-negative, self-adjoint operator on $L^2(X)$ admits such Gaussian
bounds and thus the above result would not be applicable. This occurs,
for example, for Schr{\"o}dinger operators with bad potentials
(\cite{Schreieck}) or elliptic operators of higher order with
bounded measurable coefficients (\cite{D97}). Nevertheless, it is
often possible to show a weakened version of pointwise Gaussian estimates,
so-called generalized Gaussian estimates.
\begin{definition}
Let $1\leq p\leq2\leq q\leq\infty$. A non-negative, self-adjoint
operator $A$ on $L^2(X)$ satisfies {\em generalized Gaussian
$(p,q)$-estimates} if there exist constants $b,C>0$ such that
\begin{align}\label{GGE}
\bigl\|\cf_{B(x,t^{1/2})}e^{-tA}\cf_{B(y,t^{1/2})}\bigr\|_{L^p\to L^q}
\leq
C\,\mu(B(x,t^{1/2}))^{-(\frac1{p}-\frac1{q})}
\exp\biggl(-b\,\frac{d(x,y)^2}{t}\biggr)
\end{align}
for all $t>0$ and all $x,y\in X$. In this case, we will use the
shorthand notation GGE$(p,q)$. If $A$ satisfies GGE$(2,2)$, then we
also say that $A$ enjoys {\em Davies-Gaffney estimates}.

Here, $\cf_{E_1}$ denotes the characteristic function of the set
$E_1$ and $\|\cf_{E_1}e^{-tA}\cf_{E_2}\|_{L^p\to L^q}$ is defined
via $\sup_{\|f\|_{L^p}\leq1}\|\cf_{E_1}\cdot
e^{-tA}(\cf_{E_2}f)\|_{L^q}$ for all Borel sets $E_1,E_2\subset X$.
\end{definition}
In the case $(p,q)=(1,\infty)$, this definition covers Gaussian
estimates (cf.\ \cite[Proposition 2.9]{BK1}). It is known that, for
the class of operators $A$ satisfying GGE$(p_0,p_0')$, where
$p_0\in[1,2)$, the interval $[p_0,p_0']$ is, in general, optimal for
the existence of the semigroup $(e^{-tA})_{t>0}$ on $L^p(X)$ for
each $p\in[p_0,p_0']$ (cf.,\ e.g.\ \cite[Theorem 10]{D97}).

In \cite{KunUhl} we show a spectral multiplier
result that covers operators enjoying generalized Gaussian
estimates as well. 
\begin{theorem}\cite[Theorem 5.4]{KunUhl}\label{specmultthm}
Assume that $A$ is a non-negative, self-adjoint operator on $L^2(X)$
satisfying generalized Gaussian $(p_0,p_0')$-estimates for some
$p_0\in[1,2)$. 
Let $p\in(p_0,p_0')$ and $s>D|1/p-1/2|$. Then, for any bounded
Borel function $F\colon[0,\infty)\to\C$ with $\sup_{n\in\Z}\|\om
F(2^n\cdot)\|_{C^s}<\infty$, the operator $F(A)$ is bounded on
$L^p(X)$. More precisely, there exists a constant $C_p>0$ such that
$$
\|F(A)\|_{L^p\to L^p}\leq C_p\Bigl(\sup_{n\in\Z}\|\om
F(2^n\cdot)\|_{C^s}+|F(0)|\Bigr)\,.
$$
\end{theorem}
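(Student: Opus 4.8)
The plan is to reduce the theorem to a Calder\'on--Zygmund-type argument adapted to the abstract setting of spaces of homogeneous type, using the generalized Gaussian estimates GGE$(p_0,p_0')$ as a substitute for pointwise kernel bounds. Since $A$ is non-negative and self-adjoint on $L^2(X)$, the spectral theorem already gives $\|F(A)\|_{L^2\to L^2}\leq\|F\|_\infty$, so by interpolation and duality it suffices to prove a restricted weak-type or strong-type bound on one side, say for $p$ close to $p_0$, with the other range $p\in(2,p_0')$ following by duality (note $F(A)^*=\ov F(A)$ and the hypothesis on $F$ is preserved under conjugation). The main work is therefore an endpoint estimate near $L^{p_0}$, and the natural tool is a version of the Blunck--Kunstmann criterion: an $L^2$-bounded operator $T$ is of weak type $(p_0,p_0)$ provided that for suitable ``off-diagonal'' pieces one controls $\|\cf_{X\setminus 4B}\,T(I-e^{-r_B^2 A})^N\cf_B\|_{L^{p_0}\to L^{p_0}}$ by a summable sequence in the annuli around each ball $B$ of radius $r_B$.

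First I would decompose $F$ dyadically via the partition of unity $\omega$, writing $F=\sum_{n\in\Z}F_n$ with $F_n=\omega(2^{-n}\cdot)F$, and set $T=F(A)$, $T_n=F_n(A)$. The key point is that each $F_n$ is supported in $[2^{n-2},2^n]$, so $F_n(A)=F_n(A)(I-e^{-2^{-n}A})^{-N}(I-e^{-2^{-n}A})^N$ and the function $\la\mapsto F_n(\la)(1-e^{-2^{-n}\la})^{-N}$ is again controlled in $C^s$ uniformly in $n$, with the same sup bound up to a constant depending on $N$ and $s$. This lets me trade the multiplier $T_n$ for a composition with powers of $(I-e^{-2^{-n}A})$, whose off-diagonal decay on $L^{p_0}\to L^{p_0}$ is governed directly by GGE$(p_0,p_0')$ together with the doubling property \eqref{doublingDim}. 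Second, I would estimate the kernel-type quantity $\|\cf_{A_j(B)}\,G(\sqrt{A})\,\cf_B\|_{L^{p_0}\to L^2}$ for $G$ a compactly supported function and $A_j(B)$ the $j$-th annulus around a ball $B$, using a finite-speed-of-propagation or rather a direct Gaffney-type computation: expand $G$ via its Fourier transform into $\cos(t\sqrt A)$ or simply estimate $G(\sqrt A)$ through the semigroup using an elementary integral representation, then insert \eqref{GGE}. The exponential factor $\exp(-b\,d(x,y)^2/t)$ on annulus $j$ at scale $t\cong 2^{-n}$ produces Gaussian decay in $j$, and the measure factor combined with Sobolev embedding $H_2^s\hookrightarrow C^0$ on the relevant scale yields a bound of the form $C\,2^{jD(1/p_0-1/2)}\,(\text{rapid decay in }j)\,\mu(B)^{-(1/p_0-1/2)}\|G\|_{C^s}$, which is summable in $j$ precisely because $s>D|1/p_0-1/2|$ — here is where the regularity threshold enters.

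Third, I would assemble these pieces: for a given $L^{p_0}\cap L^2$ function $f$ one performs a Calder\'on--Zygmund decomposition at height related to $\|f\|_{p_0}^{p_0}/\la^{p_0}$, splits $f=g+\sum_Q b_Q$ into good and bad parts, handles $g$ by the $L^2$ bound, and for each bad atom $b_Q$ replaces $T b_Q$ by $T(I-e^{-\ell(Q)^2 A})^N b_Q$ plus an error that is itself controlled by the semigroup's $L^2$ off-diagonal bounds; the annular sum from the previous step gives the required $L^1$-type control of $T b_Q$ away from a dilate of $Q$, and summing over $Q$ with the finite-overlap property of the decomposition yields the weak-$(p_0,p_0)$ bound. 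Interpolating with $L^2$ gives boundedness on $(p_0,2]$, and duality extends this to $[2,p_0')$; the quantitative bound with the constant $C_p(\sup_n\|\omega F(2^n\cdot)\|_{C^s}+|F(0)|)$ is tracked through each step, the term $|F(0)|$ accounting for the possible non-zero limit of $F$ at the origin (handled by subtracting $F(0)$ and noting $F(0)\cdot I$ is trivially bounded). The main obstacle is the second step: making the off-diagonal $L^{p_0}\to L^2$ estimate for $G(\sqrt A)$ genuinely quantitative in both the annulus index $j$ and the $C^s$-norm of $G$, since unlike the pointwise-Gaussian case one cannot simply read off a kernel bound and must instead convert GGE into an operator-norm estimate on each annulus, carefully optimizing the Gaussian factor against the volume growth so that the exponent $s>D|1/p_0-1/2|$ is exactly what makes the annular series converge. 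This is precisely the technical heart carried out in \cite{KunUhl}, and the argument above is a sketch of that proof; I would cite it for the full details rather than reproduce every estimate.
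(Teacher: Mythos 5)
This theorem is not proved in the paper at all: it is quoted from \cite[Theorem 5.4]{KunUhl}, and the Remark following it records the actual route taken there, namely a multiplier theorem on Hardy spaces $H^1_A$ associated with $A$ combined with an interpolation argument in the spirit of \cite{Kri}, in which the smoothness order is interpolated \emph{together with} the integrability exponent. Your sketch follows instead the Calder\'on--Zygmund/weak-type route of \cite{B} and \cite{DOS}. That route exists, but as you have set it up it has a genuine gap, and the gap sits exactly at the place you yourself flag as the technical heart.

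The gap is twofold. First, structurally: you aim at a weak-type $(p_0,p_0)$ bound and then invoke Marcinkiewicz interpolation with the trivial $L^2$ bound. Interpolation in $p$ with the multiplier $F$ held fixed cannot lower the smoothness requirement: whatever order $s$ the endpoint estimate needs is then required for every $p\in(p_0,2)$. The theorem, however, asserts the $p$-dependent threshold $s>D|1/p-1/2|$, which tends to $0$ as $p\to2$ and is therefore far below the endpoint value $D(1/p_0-1/2)$. To obtain a $p$-dependent order one must interpolate the smoothness as well --- Stein interpolation for an analytic family $F_z$ of varying differentiability, or interpolation between Hardy spaces of different orders --- and this is precisely what the $H^1_A$ approach of \cite{KunUhl} is designed to do. Second, even the endpoint estimate is not available at the order you claim: without pointwise kernel bounds, converting the $C^s$-control of the dyadic pieces $F_n$ into off-diagonal $L^{p_0}\to L^2$ bounds summable over annuli (via the Fourier transform of $F_n$ and the semigroup or wave group) costs extra derivatives, and Blunck's theorem \cite[Theorem 1.1]{B}, cited in the Remark for exactly this reason, gives weak type $(p_0,p_0)$ only under a strictly stronger condition on $s$. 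Your assertion that the annular series converges ``precisely because $s>D|1/p_0-1/2|$'' is thus the unproved crux, not a routine computation. (A further, minor, divergence: the paper emphasizes that \cite{KunUhl} avoids finite propagation speed so as to cover generalized Gaussian estimates of arbitrary order; for the second-order scaling used here you may use it, as \cite{COSY} does, but that does not by itself repair the order of smoothness in your scheme.)
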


\begin{remark}
(1) The spectral multiplier result in \cite{DOS} corresponds to the
    case $p_0=1$, i.e. to the case of Gaussian type kernel bounds
    \eqref{GE}.

(2) There is an earlier version of the result due to S. Blunck
    (\cite[Theorem 1.1]{B}) under stronger assumptions on the
    differentiability order $s$ in the H{\"o}rmander condition.

(3) The assertion of Theorem \ref{specmultthm} remains valid for 
    vector valued operators on $L^p(X,\C^n)$.

(4) There are spectral multiplier results for Hardy spaces $H^1_A$
    associated with elliptic second order operators $A$
    (cf.\ \cite{DY,Dzi2}). Via an interpolation argument already used in
    \cite{Kri} this also yields the assertion of Theorem
    \ref{specmultthm}. In fact, this is the idea behind the approach
    we use in \cite{KunUhl}. However, the result in \cite{KunUhl}
    applies to operators satisfying generalized Gaussian estimates of
    \emph{any order}, which means that essential technical tools as
    the finite propagation speed of the wave equation for $A$, which
    one has for second order operators, cannot be used.

(5) Theorem \ref{specmultthm} is formulated with H\"older spaces $C^s$
    in place of Bessel potential spaces $H^s_2$. We refer to the
    discussion in \cite{DOS} where it is pointed out that, in general,
    one cannot replace $C^s$ by $H^s_2$ without additional assumptions.

(6) Very recently, a spectral multiplier theorem under the assumption
    of general Gaussian estimates of second order has been shown in
    \cite{COSY}. The proof does not rely on a result in Hardy spaces.
    It relies on results from \cite{B} and makes even heavier use of
    finite propagation speed, which holds only for second order. 
\end{remark}

\section{The Maxwell operator}

We provide a short overview on the definitions and some
basic properties of the natural function spaces needed for defining
the Maxwell operator. We start with the specification of the
underlying domain. Throughout the whole section, let $\Om$ be a {\em
bounded Lipschitz domain in} $\R^3$, i.e.\ a bounded, connected,
open subset of $\R^3$ with a Lipschitz continuous boundary
$\partial\Om$. This definition allows domains with
corners, but cuts or cusps are excluded. Further, we remark that the
unit exterior normal field $\nu\colon\partial\Om\to\R^3$ can then be
defined almost everywhere on the boundary $\partial\Om$ of $\Om$.

We consider the differential operators divergence $\div$ and
rotation $\rot$ on $L^2(\Om,\C^3)$ in the distributional sense and
introduce the function space
\begin{align}\label{FctSpaceV}
V(\Om):=\bigl\{u\in L^2(\Om,\C^3)\,:\,\div u\in L^2(\Om,\C),
\,\rot u\in L^2(\Om,\C^3),\,\nu\cdot u|_{\partial\Om}=0\bigr\}
\end{align}
equipped with the inner product
$$
(u,v)_{V(\Om)}
:=
(u,v)_{L^2(\Om,\C^3)}+(\div u,\div v)_{L^2(\Om,\C)}+
(\rot u,\rot v)_{L^2(\Om,\C^3)}\,.
$$
Then $V(\Om)$ becomes a Hilbert space which is dense in
$L^2(\Om,\C^3)$. 
Note that the boundary condition of $V(\Om)$ means 
that the exterior normal component vanishes.
In general, $V(\Om)$ is not contained in
$H^1(\Om,\C^3)$ (cf.,\ e.g.\ \cite[p.\ 832]{Amrouche}).
However, under additional assumptions on the
domain $\Om$ the space $V(\Om)$ is continuously embedded into
$H^1(\Om,\C^3)$. For example, this is the case if $\Om$ has a
$C^{1,1}$-boundary or if $\Om$ is convex (cf.,\ e.g.\ \cite[Theorems 2.9
and 2.17]{Amrouche}). Nevertheless, the following statement due to
D.\ Mitrea, M.\ Mitrea, and M.\ Taylor (\cite[p.\ 87]{MMT}) holds
for arbitrary bounded Lipschitz domains $\Om$ in $\R^3$.
\begin{fact}\label{V-H1/2-embedding}
The space $V(\Om)$ is continuously embedded into
$H^{1/2}(\Om,\C^3)$. More precisely, there exists a constant $C>0$
depending only on the boundary $\partial\Om$ and on the diameter
$\mbox{diam}(\Om)$ of $\Om$ such that for every $u\in V(\Om)$
\begin{align*}
\|u\|_{H^{1/2}(\Om,\C^3)}
\leq
C\bigl(\|u\|_{L^2(\Om,\C^3)}+\|\div u\|_{L^2(\Om,\C)}+
\|\rot u\|_{L^2(\Om,\C^3)}\bigr)\,.
\end{align*}
\end{fact}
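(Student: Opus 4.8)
The plan is to reduce the $H^{1/2}$-estimate for $V(\Om)$ to classical facts about the operators $\div$ and $\rot$ on Lipschitz domains, combined with a Hodge-type decomposition and elliptic regularity for the Dirichlet/Neumann Laplacian. First I would split a given $u\in V(\Om)$ into a gradient part and a divergence-free part by solving an auxiliary scalar problem: let $\varphi\in H^1(\Om,\C)$ solve the Neumann problem $\Delta\varphi=\div u$ in $\Om$, $\nu\cdot\grad\varphi=\nu\cdot u=0$ on $\partial\Om$ (which is solvable since $\int_\Om\div u\,dx=\int_{\partial\Om}\nu\cdot u\,d\si=0$). Then $w:=u-\grad\varphi$ satisfies $\div w=0$, $\rot w=\rot u$, and $\nu\cdot w|_{\partial\Om}=0$. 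For the gradient part, $\|\grad\varphi\|_{H^{1/2}}$ is controlled by $\|\varphi\|_{H^{3/2}}$, and since $\Om$ is a bounded Lipschitz domain the Neumann problem enjoys the regularity estimate $\|\varphi\|_{H^{3/2}(\Om)}\leqC\|\div u\|_{L^2(\Om)}$ (the sharp $H^{3/2}$ Neumann regularity on Lipschitz domains, due to Jerison--Kenig). For the divergence-free part $w$, one invokes the known result that a vector field on a bounded Lipschitz domain with $\div w\in L^2$, $\rot w\in L^2$ and \emph{vanishing normal trace} lies in $H^{1/2}$ with $\|w\|_{H^{1/2}}\leqC(\|w\|_{L^2}+\|\rot w\|_{L^2})$; this is precisely the content cited from \cite{MMT} in the normal-boundary-condition case, and it is where the Lipschitz geometry enters in an essential way.

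The key technical input is therefore the $H^{1/2}$ regularity of the vector Poisson-type problem $\rot w = f$, $\div w = 0$, $\nu\cdot w|_{\partial\Om}=0$ on a Lipschitz domain. I would obtain it either by quoting \cite{MMT} directly (the statement of the Fact attributes exactly this to D. Mitrea, M. Mitrea, and M. Taylor) or, if a self-contained argument is wanted, by writing $w=\rot\Phi$ for a suitable vector potential $\Phi$ with $\div\Phi=0$ and a compatible boundary condition, reducing to $\grad\div\Phi-\Delta\Phi = \rot f$ plus boundary conditions, and then applying $H^{3/2}$-regularity for the (vector) Laplacian with those mixed boundary conditions on Lipschitz domains. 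One must also absorb the finite-dimensional harmonic fields (the cohomology spaces $\mathcal H_N(\Om)=\{h:\div h=0,\rot h=0,\nu\cdot h|_{\partial\Om}=0\}$), which are harmless since on a bounded Lipschitz domain they are finite-dimensional and contained in $C^\infty(\ov\Om)$, hence trivially in $H^{1/2}$ with norm dominated by the $L^2$ norm.

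Assembling the pieces: $\|u\|_{H^{1/2}}\leq\|\grad\varphi\|_{H^{1/2}}+\|w\|_{H^{1/2}}\leqC\|\div u\|_{L^2}+C(\|w\|_{L^2}+\|\rot u\|_{L^2})\leqC(\|u\|_{L^2}+\|\div u\|_{L^2}+\|\rot u\|_{L^2})$, using $\|w\|_{L^2}\leq\|u\|_{L^2}+\|\grad\varphi\|_{L^2}\leqC(\|u\|_{L^2}+\|\div u\|_{L^2})$; the dependence of $C$ only on $\partial\Om$ and $\mathrm{diam}(\Om)$ follows by tracking constants through the Neumann estimate and the $\rot$-regularity estimate (both scale-invariant after rescaling to unit diameter). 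The main obstacle is genuinely the Lipschitz regularity theory: on smooth or convex domains one even gets $H^1$, but on general Lipschitz domains $H^{1/2}$ is the sharp exponent and the estimate is not elementary — it rests on the Jerison--Kenig $H^{3/2}$ theory for the Laplacian on Lipschitz domains (equivalently, $L^2$ boundedness of the relevant boundary layer potentials), which I would cite rather than reprove. Given the excerpt explicitly attributes the statement to \cite{MMT}, the cleanest write-up is simply to refer to that source for the divergence-free piece and supply only the short Hodge-decomposition reduction above.
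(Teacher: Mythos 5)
The paper offers no proof of this Fact at all---it is quoted directly from \cite[p.~87]{MMT}---and since your write-up also ultimately rests on citing that source (or the equivalent $L^2$ layer-potential/Rellich-identity theory of Jerison--Kenig for Lipschitz domains) for the genuinely hard step, your approach coincides with the paper's where it matters. Your preliminary reduction via the Neumann problem $\Delta\varphi=\div u$, $\partial_\nu\varphi=0$ is sound (the compatibility condition and the $H^{3/2}$ regularity you invoke are correct), but it is superfluous: the estimate in \cite{MMT} is already stated for fields with arbitrary $\div u\in L^2(\Om,\C)$ and vanishing normal trace, so no splitting into gradient and divergence-free parts is needed.
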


The definition of the Maxwell operator on
$L^2(\Om,\C^3)$ shall be given in a quite general framework
without any regularity assumptions on the coefficient
matrix. At a first stage, we introduce a form $\a$ with the form
domain $V(\Om)$ and establish generalized Gaussian estimates for the
corresponding semigroup $(e^{-tA_2})_{t>0}$ on $L^2(\Om,\C^3)$ by using Davies'
perturbation method (cf.\ Theorem \ref{GGE-Hodge}). To the best of
our knowledge, this procedure was never elaborated before in this
context. The Maxwell operator is then defined as the restriction of
$A_2$ on the subspace of divergence-free vector fields.

Fix, once and for all, a matrix-valued function $\eps(\cdot)\in
L^\infty(\Om,\C^{3\times3})$ taking values in the set of positive
definite, hermitian matrices. Assume additionally that
$\eps(\cdot)^{-1}\in L^\infty(\Om,\C^{3\times3})$. As immediate
consequences we deduce that, for almost every $x\in\Om$, the matrix
$\eps(x)^{-1}$ is also hermitian and that $\eps(\cdot)^{-1}$
fulfills the following {\em uniform ellipticity condition}
\begin{align}\label{epsell}
\eps(x)^{-1}\xi\cdot\ov\xi\geq\eps_0|\xi|^2
\end{align}
for all $\xi\in\C^3$ and almost all $x\in\Om$, where the constant
$\eps_0>0$ is independent of $\xi$ and $x$. We consider the densely
defined, sesquilinear form
\begin{align*}
\a(u,v):=\int_\Om\eps(\cdot)^{-1}\rot u\cdot\ov{\rot v}\,dx+
\int_\Om\div u~\ov{\div v}\,dx\qquad(u,v\in\dom(\a))
\end{align*}
with the form domain $\dom(\a):=V(\Om)$. Due to the properties of
the coefficient matrix $\eps(\cdot)^{-1}$, the form $\a$ is
continuous and coercive in the sense that there exist constants
$C_1\geq0$, $C_2>0$ such that for all $u\in V(\Om)$
\begin{align}\label{coercive}
\Re\a(u,u)+C_1\|u\|^2_{L^2(\Om,\C^3)}\geq C_2\|u\|^2_{V(\Om)}
\end{align}
(in fact one can take $C_1=C_2=\min\{\eps_0,1\}$). Moreover, it is
symmetric and satisfies $\Re\a(u,u)\ge0$ for all $u\in V(\Om)$. The {\em
operator $A_2$ associated with the form $\a$} is defined via
\begin{center}
$u\in\dom(A_2)$, $A_2u=f$ \quad if and only if \quad $u\in V(\Om)$
and $\a(u,v)=(f,v)_{L^2(\Om,\C^3)}$ for all $v\in V(\Om)$.
\end{center}
Then $A_2$ is self-adjoint and $-A_2$ generates a bounded analytic semigroup
$(e^{-tA_2})_{t>0}$ acting on $L^2(\Om,\C^3)$ (cf., e.g.\ \cite[p.~450]{Dautray}). 

\begin{theorem}\label{GGE-Hodge}
The operator $A_2$ associated with the form $\a$ enjoys generalized
Gaussian $(3/2,3)$-estimates.
\end{theorem}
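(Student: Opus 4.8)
The plan is to use Davies' perturbation (exponential conjugation) method. Fix a bounded Lipschitz function $\phi\colon\Om\to\R$ with $\|\nabla\phi\|_\infty\le 1$, set $\al\in\R$, and consider the twisted semigroup $e^{-tA_2^{(\al\phi)}}:=e^{\al\phi}e^{-tA_2}e^{-\al\phi}$, which is generated by the operator $A_2^{(\al\phi)}$ associated with the perturbed (no longer symmetric) form $\a_{\al\phi}(u,v):=\a(e^{-\al\phi}u,e^{\al\phi}v)$. Expanding the $\rot$ and $\div$ terms, one finds $\a_{\al\phi}(u,v)=\a(u,v)+\al\,\ell(u,v)+\al^2 q(u,v)$, where $\ell$ collects the first-order commutator terms (involving $\nabla\phi\times$ and $\nabla\phi\cdot$) and $q$ the zeroth-order term bounded by $\|\nabla\phi\|_\infty^2\|u\|_{L^2}\|v\|_{L^2}\le\|u\|_{L^2}\|v\|_{L^2}$. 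The key analytic input is that $\ell$ can be estimated, for $\del>0$ arbitrary, by $|\al\,\ell(u,u)|\le \del\bigl(\|\rot u\|_{L^2}^2+\|\div u\|_{L^2}^2\bigr)+C_\del\al^2\|u\|_{L^2}^2$, using Young's inequality and $\|\nabla\phi\|_\infty\le1$. Combined with coercivity \eqref{coercive}, this yields $\Re\a_{\al\phi}(u,u)\ge \tfrac12 C_2\|u\|_{V(\Om)}^2-C\al^2\|u\|_{L^2}^2$, i.e.\ $A_2^{(\al\phi)}+C\al^2$ is again accretive with form domain $V(\Om)$ and the same coercivity constant up to a factor. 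Hence $\|e^{-tA_2^{(\al\phi)}}\|_{L^2\to L^2}\le e^{Cb\al^2 t}$ for all $t>0$ and all admissible $\phi,\al$.

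Next I would upgrade this $L^2\to L^2$ bound for the twisted semigroup to an $L^{3/2}\to L^3$ bound. The point is that by Fact \ref{V-H1/2-embedding} the form domain $V(\Om)$ embeds continuously into $H^{1/2}(\Om,\C^3)$, which in turn embeds into $L^{q}(\Om,\C^3)$ for $q=\tfrac{2\cdot 3}{3-1}=3$ (Sobolev embedding $H^{1/2}(\Om)\hookrightarrow L^3(\Om)$ in dimension $3$, valid on bounded Lipschitz domains). By a standard perturbation-of-self-adjoint-semigroups argument — writing $e^{-tA_2^{(\al\phi)}}=e^{-tA_2^{(\al\phi)}/2}\cdot e^{-tA_2^{(\al\phi)}/2}$, using that $e^{-tA_2^{(\al\phi)}/2}$ maps $L^2$ into the form domain with norm $\leqC (1+C\al^2 t)^{1/2}t^{-1/2}e^{Cb\al^2 t}$ (an energy estimate from the coercive form, exactly as for analytic semigroups), composing with the embedding $V(\Om)\hookrightarrow L^3$, and dualizing the other factor to get $L^{3/2}\to L^2$ — one arrives at
\begin{align*}
\bigl\|e^{-tA_2^{(\al\phi)}}\bigr\|_{L^{3/2}\to L^3}\leqC\,|\Om|^{\,?}\;t^{-\gamma}\,e^{Cb\al^2 t}\qquad(t>0),
\end{align*}
with $\gamma$ fixed by scaling ($\gamma=D(\tfrac1{3/2}-\tfrac13)=3\cdot\tfrac13=1$ with $D=3$). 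Actually, since $\Om$ is bounded, one first gets this for $0<t\le 1$ (where the singular factor $t^{-1}$ is the relevant one) and then extends to all $t>0$ using analyticity and the uniform $L^2$ bound, at the cost of adjusting $b$; equivalently one works directly with $\mu(B(x,t^{1/2}))\cong t^{3/2}$ for $t$ up to $\mathrm{diam}(\Om)^2$ and with a trivial bound beyond.

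Finally, the standard Davies trick converts the weighted bound into GGE$(3/2,3)$: for $x,y\in\Om$ choose $\phi(z):=\min\{d(z,x),d(x,y)\}/\mathrm{diam}(\Om)$, so $\|\nabla\phi\|_\infty\le 1/\mathrm{diam}(\Om)\le 1$ after rescaling, $\phi\le d(x,y)/\mathrm{diam}(\Om)$ everywhere, and $\phi=0$ on $B(x,t^{1/2})$ while $\phi=d(x,y)/\mathrm{diam}(\Om)$ on the far side; inserting $\cf_{B(x,t^{1/2})}e^{-tA_2}\cf_{B(y,t^{1/2})}=e^{-\al\phi}\cf_{B(x,t^{1/2})}\bigl(e^{\al\phi}e^{-tA_2}e^{-\al\phi}\bigr)\cf_{B(y,t^{1/2})}e^{\al\phi}$, bounding the outer weights crudely on the relevant balls, and then optimizing over $\al>0$ (the choice $\al\cong d(x,y)/t$ is forced by balancing $e^{Cb\al^2 t}$ against $e^{-\al\cdot(\text{distance})}$) produces the Gaussian factor $\exp(-b\,d(x,y)^2/t)$ together with the correct power $t^{-(\frac1{3/2}-\frac13)\cdot\frac{D}{2}}\cong\mu(B(x,t^{1/2}))^{-(\frac1{3/2}-\frac13)}$. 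This is exactly \eqref{GGE} for $(p,q)=(3/2,3)$.

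The main obstacle is the second step: obtaining the $L^{3/2}\to L^3$ off-diagonal-free bound for the \emph{twisted} semigroup with the right power of $t$ and with the exponential constant depending only on $\al^2$ (not on $\phi$). This requires that the constant in the embedding $V(\Om)\hookrightarrow H^{1/2}(\Om)\hookrightarrow L^3(\Om)$ and the coercivity constant of the perturbed form survive uniformly in $\phi$ and $\al$; the Fact \ref{V-H1/2-embedding} is tailored precisely for this, but one must be careful that the perturbed form $\a_{\al\phi}$ still has form domain exactly $V(\Om)$ (it does, since $e^{\pm\al\phi}$ are bounded multipliers preserving $V(\Om)$ — here one uses $\rot(e^{\al\phi}u)=e^{\al\phi}(\rot u+\al\nabla\phi\times u)$ and similarly for $\div$, so the boundary condition $\nu\cdot u|_{\partial\Om}=0$ is preserved because $e^{\al\phi}$ is scalar). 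Once the weighted bound is in hand, the rest is the routine Davies optimization.
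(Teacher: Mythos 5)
Your overall skeleton (Davies' twisting of the form, the quadratic perturbation estimate, the use of Fact \ref{V-H1/2-embedding} and $H^{1/2}\hookrightarrow L^3$, and the final optimization in $\al$) matches the paper's proof. But there is a genuine gap in your second step, and you have in fact put your finger on it without resolving it. If you apply the \emph{global} embedding $V(\Om)\hookrightarrow L^3(\Om)$ to $v=e^{-tA_2/2}f$, the energy estimate gives $\|v\|_2+\|\rot v\|_2+\|\div v\|_2\leqC t^{-1/2}\|f\|_2$ for small $t$, hence $\|e^{-tA_2/2}\|_{2\to3}\leqC t^{-1/2}$ and, after dualizing and composing, $\|e^{-tA_2}\|_{3/2\to3}\leqC t^{-1}$. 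The GGE$(3/2,3)$ normalization requires $\mu(B(x,t^{1/2}))^{-(\frac23-\frac13)}\cong t^{-\frac32\cdot\frac13}=t^{-1/2}$ (note your exponent $\gamma=D(\frac1p-\frac1q)$ omits the parabolic factor $\frac12$; your own final paragraph quotes the correct power $t^{-1/2}$, so the proposal is internally inconsistent). The extra factor $t^{-1/2}$ is not an artifact: the global embedding constant depends only on $\Om$ and cannot produce the gain that comes from working at the spatial scale $t^{1/2}$, so no amount of care about uniformity in $\phi$ and $\al$ repairs this.

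The missing idea is the paper's Steps 4--5: one first proves Davies--Gaffney ($L^2\to L^2$ off-diagonal) bounds for $e^{-tA_2}$ \emph{and} for $t^{1/2}\rot e^{-tA_2}$ and $t^{1/2}\div e^{-tA_2}$, and then applies the embedding \emph{locally} on $\Om_0=B(x,2t^{1/2})$, after multiplying $e^{-tA_2}f$ by a cut-off $\eta$ so that $\nu\cdot(\eta e^{-tA_2}f)|_{\partial\Om_0}=0$. The key point is the rescaled form of the embedding,
\begin{align*}
\|w\|_{L^3(\Om_0)}\leqC R^{-1/2}\bigl(\|w\|_{L^2(\Om_0)}+R\|\div w\|_{L^2(\Om_0)}+R\|\rot w\|_{L^2(\Om_0)}\bigr),\qquad R=\mbox{diam}(\Om_0)\cong t^{1/2},
\end{align*}
with constant controlled by the Lipschitz character of $\Om_0$ only. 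This converts the $L^2$ off-diagonal data into $t^{-1/4}\exp(-b|x-y|^2/t)$, i.e.\ GGE$(2,3)$ with the correct volume factor $t^{-\frac32(\frac12-\frac13)}$; GGE$(3/2,3)$ then follows by self-adjointness (dualize to get $(3/2,2)$) and composition via the semigroup law. So you should replace your ``split $e^{-tA}$ into two halves and use the global embedding'' step by: (i) twisted Davies--Gaffney bounds for the three families $e^{-tA_2}$, $t^{1/2}\rot e^{-tA_2}$, $t^{1/2}\div e^{-tA_2}$ (your form estimate plus analyticity of the perturbed semigroup already gives these), and (ii) the localized, rescaled Sobolev step above.
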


\begin{proof}
We just have to show that $A_2$ fulfills generalized Gaussian
$(2,3)$-estimates. Thanks to the self-adjointness of $A_2$,
generalized Gaussian $(3/2,2)$-estimates then follow by dualization
and the claimed generalized Gaussian $(3/2,3)$-estimates by
composition and the semigroup law. We divide the proof into several
steps. The first three steps are devoted to the proof of 
Davies-Gaffney estimates for the operator families
$(e^{-tA_2})_{t>0}$, $\{t^{1/2}\div e^{-tA_2}:\,t>0\}$, and
$\{t^{1/2}\rot e^{-tA_2}:\,t>0\}$. In order to derive these bounds,
we will use Davies' perturbation method. It consists in studying
``twisted'' forms
$$
\a_{\vrh\phi}(u,v):=\a(e^{\vrh\phi}u,e^{-\vrh\phi}v)
\qquad(u,v\in V(\Om)),
$$
where $\vrh\in\R$ and $\phi\in\E:=\{\phi\in C_c^\infty(\ov\Om,\R)
\,:\, \|\partial_j\phi\|_\infty\leq1 \mbox{ for all
}j\in\{1,2,3\}\}$. Observe that the multiplication with a function
of the form $e^{\vrh\phi}$ leaves the space $V(\Om)$ invariant and
hence the form $\a_{\vrh\phi}$ is well-defined. In the remaining two
steps we deduce generalized Gaussian $(2,3)$-estimates for $A_2$ by
combining the Davies-Gaffney estimates and the Sobolev embedding
theorem. In the following, we use the shorthand notation
$\|\cdot\|_{p\to q}$ for the norm $\|\cdot\|_{L^p(\Om,\C^3)\to
L^q(\Om,\C^3)}$.

\smallskip
{\bf Step 1:} We claim that for each $\ga\in(0,1)$ there exists a
constant $\om_0\geq0$ such that for all $u\in V(\Om)$, $\vrh\in\R$,
and $\phi\in\E$
\begin{align}\label{Step1-twistform}
\bigl|\a_{\vrh\phi}(u,u)-\a(u,u)\bigr|\leq\gamma\a(u,u)+\om_0\vrh^2\|u\|_2^2\,.
\end{align}

After expanding $\a_{\vrh\phi}(u,u)$ with the help of the product
rules for $\div$ and $\rot$, we get for any $u\in V(\Om)$,
$\vrh\in\R$, and $\phi\in\E$
\begin{align*}
\bigl|\a_{\vrh\phi}(u,u)-\a(u,u)\bigr|
&\leq
|\vrh|\int_\Om|{\eps(\cdot)^{-1}}(\grad\phi\times u)\cdot\ov{\rot u}|\,dx
+|\vrh|\int_\Om|(\grad\phi\cdot u)\,\ov{\div u}|\,dx
\\&\quad
+|\vrh|\int_\Om|\eps(\cdot)^{-1}\rot u\cdot(\grad\phi\times \ov u)|\,dx
+|\vrh|\int_\Om|\div u\,(\grad\phi\cdot\ov u)|\,dx
\\&\quad
+\vrh^2\int_\Om|\eps(\cdot)^{-1}(\grad\phi\times u)\cdot(\grad\phi\times\ov u)|\,dx
+\vrh^2\int_\Om|\grad\phi\cdot u|^2\,dx\,.
\end{align*}
We analyze each of the summands on the right-hand side
separately. Let $\del>0$ to be chosen later. By applying the
Cauchy-Schwarz inequality, by using the elementary inequality
$ab\leq\del a^2+\frac1{4\del}b^2$, which is valid for any real
numbers $a,b$, and by recalling the properties of $\phi$, we can
estimate the first term in the following way
\begin{align*}
&|\vrh|\int_\Om|{\eps(\cdot)^{-1}}(\grad\phi\times u)\cdot\ov{\rot u}|\,dx
\leq
|\vrh|\int_\Om\|\eps(\cdot)^{-1}\|_\infty|\grad\phi|\,|u|\,|\rot u|\,dx
\\&\quad\leq
\|\eps(\cdot)^{-1}\|_\infty\sqrt3\,\|\grad\phi\|_\infty\int_\Om|\rot u|\,|\vrh|\,|u|\,dx
\leq
\sqrt3\,\|\eps(\cdot)^{-1}\|_\infty\Bigl(\del\|\rot u\|_2^2+
\frac1{4\del}\,\vrh^2\|u\|_2^2\Bigr)\,.
\end{align*}
The second term is bounded by
\begin{align*}
&|\vrh|\int_\Om|(\grad\phi\cdot u)\,\ov{\div u}|\,dx
\leq
|\vrh|\int_\Om|\grad\phi|\,|u|\,|\div u|\,dx
\\&\quad\leq
\sqrt3\int_\Om|\vrh|\,|u|\,|\div u|\,dx
\leq
\sqrt3\Bigl(\del\|\div u\|_2^2+\frac1{4\del}\,\vrh^2\|u\|_2^2\Bigr)\,.
\end{align*}
The third term can be treated analogously to the first term
\begin{align*}
|\vrh|\int_\Om|\eps(\cdot)^{-1}\rot u\cdot(\grad\phi\times\ov u)|\,dx
\leq
\sqrt3\,\|\eps(\cdot)^{-1}\|_\infty\Bigl(\del\|\rot u\|_2^2+
\frac1{4\del}\,\vrh^2\|u\|_2^2\Bigr)\,.
\end{align*}
The estimate for the fourth term is prepared in a similar manner as
that for the second term
\begin{align*}
&|\vrh|\int_\Om|\div u\,(\grad\phi\cdot\ov u)|\,dx
\leq
\sqrt3\Bigl(\del\|\div u\|_2^2+\frac1{4\del}\,\vrh^2\|u\|_2^2\Bigr)\,.
\end{align*}
The dealing with the fifth term consists in
\begin{align*}
&\vrh^2\int_\Om|{\eps(\cdot)^{-1}}(\grad\phi\times u)\cdot(\grad\phi\times\ov u)|\,dx
\leq
\vrh^2\int_\Om|{\eps(\cdot)^{-1}}(\grad\phi\times u)|\,|\grad\phi\times\ov u|\,dx
\\&\quad\leq
\vrh^2\int_\Om\sqrt3\,\|\eps(\cdot)^{-1}\|_\infty\,|u|\,\sqrt3\,|u|\,dx
=
3\|\eps(\cdot)^{-1}\|_\infty\vrh^2\|u\|_2^2\,,
\end{align*}
whereas the sixth term is bounded by
\begin{align*}
\vrh^2\int_\Om|\grad\phi\cdot u|^2\,dx
\leq
\vrh^2\int_\Om|\grad\phi|^2|u|^2\,dx
\leq
3\vrh^2\|u\|_2^2\,.
\end{align*}
By putting all these estimates together, we finally end up with
\begin{align*}
\bigl|\a_{\vrh\phi}(u,u)-\a(u,u)\bigr|
&\leq
\bigl(2\sqrt3\,\|\eps(\cdot)^{-1}\|_\infty+2\sqrt3\bigr)\,\del\,\bigl(\|\rot u\|_2^2+\|\div u\|_2^2\bigr)
\\&\quad\quad+
\Bigl(\bigl(2\sqrt3\,\|\eps(\cdot)^{-1}\|_\infty+2\sqrt3\bigr)\frac1{4\del}+
3\|\eps(\cdot)^{-1}\|_\infty+3\Bigr)\vrh^2\|u\|_2^2\,.
\end{align*}
The ellipticity property (\ref{epsell}) of the coefficient matrix
$\eps(\cdot)^{-1}$ yields for each $u\in V(\Om)$
\begin{align}\label{Step1-ineq1}
\a(u,u)\geq\min\{\eps_0,1\}\bigl(\|\rot u\|_2^2+\|\div u\|_2^2\bigr)\,.
\end{align}
Now let $\ga\in(0,1)$ be arbitrary. Take $\del>0$ such that
$\ga=(2\sqrt3\,\|\eps(\cdot)^{-1}\|_\infty+2\sqrt3)\,\del/\min\{\eps_0,1\}$.
Then we deduce for each $u\in V(\Om)$, $\vrh\in\R$, and $\phi\in\E$
\begin{align*}
\bigl|\a_{\vrh\phi}(u,u)-\a(u,u)\bigr|
&\leq
\ga\a(u,u)+\om_0\vrh^2\|u\|_2^2
\end{align*}
with some constant $\om_0\geq0$ depending exclusively on $\ga$,
$\eps_0$, $\|\eps(\cdot)^{-1}\|_\infty$. This shows
(\ref{Step1-twistform}).

\smallskip
{\bf Step 2:} Due to (\ref{Step1-twistform}), if $\om>\om_0$, we can
write for any $u\in V(\Om)$, $\vrh\in\R$, and $\phi\in\E$
\begin{align*}
\Re\a_{\vrh\phi}(u,u)
\geq
\a(u,u)-\bigl|\a(u,u)-\a_{\vrh\phi}(u,u)\bigr|
\geq
(1-\ga)\a(u,u)-\om\vrh^2\|u\|_2^2\,.
\end{align*}
By recalling (\ref{Step1-ineq1}), we thus have shown that the form
$\a_{\vrh\phi\om}:=\a_{\vrh\phi}+\om\vrh^2$ is coercive in the sense
of (\ref{coercive}) with $C_1=C_2=(1-\ga)\min\{\eps_0,1\}$. This
entails that the operator $A_{\vrh\phi\om}$ associated with the form
$\a_{\vrh\phi\om}$ is sectorial of some angle
$\theta_0\in(0,\pi/2)$. Therefore, $-A_{\vrh\phi\om}$ generates a
bounded analytic semigroup $(e^{-tA_{\vrh\phi\om}})_{t>0}$ on
$L^2(\Om,\C^3)$ and additionally
\begin{align}\label{Step2-SG}
\bigl\|e^{-zA_{\vrh\phi\om}}\bigr\|_{2\to2}\leq1
\end{align}
for all $z\in\C\setminus\{0\}$ with $|\arg z|\leq\theta_0$. In view
of \cite[Lemma 3.2]{LisSobVog}, this yields for any $\vrh\in\R$,
$\phi\in\E$, and $z\in\C\setminus\{0\}$ with $|\arg z|\leq\theta_0$
\begin{align}\label{Step2-SG-weight}
\bigl\|e^{-\vrh\phi}e^{-zA_2}e^{\vrh\phi}\bigr\|_{2\to2}\leq e^{\om\vrh^2\Re z}
\end{align}
and thus, by a similar reasoning as in the proof of
\cite[Proposition 8.22]{Levico}, the operator $A_2$ satisfies 
Davies-Gaffney estimates. Additionally, we have for each $\vrh\in\R$,
$\phi\in\E$, and $t>0$
\begin{align*}
\bigl\|A_{\vrh\phi\om}e^{-tA_{\vrh\phi\om}}\bigr\|_{2\to2}
\leq
\frac1{t\sin\theta_0}\,.
\end{align*}
Indeed, this estimate follows easily from Cauchy's formula and (\ref{Step2-SG})
\begin{align*}
\bigl\|A_{\vrh\phi\om}e^{-tA_{\vrh\phi\om}}\bigr\|_{2\to2}\nonumber
&=
\bigl\|\frac1{2\pi i}\int_{|z-t|=t\sin\theta_0}
\frac1{(z-t)^2}\,e^{-zA_{\vrh\phi\om}}\,dz\bigr\|_{2\to2}
\\&\leq
\frac1{2\pi}\,2\pi t\sin\theta_0\,\frac1{(t\sin\theta_0)^2}\,
=
\frac1{t\sin\theta_0}\,.
\end{align*}

\smallskip
{\bf Step 3:} Our next task consists in verifying Davies-Gaffney
estimates for the operator families $\{t^{1/2}\div
e^{-tA_2}:\,t>0\}$ and $\{t^{1/2}\rot e^{-tA_2}:\,t>0\}$.

For arbitrary $f\in C_c^\infty(\Om,\C^3)$, $\vrh\in\R$, $\phi\in\E$,
$\om>\om_0$, and $t>0$ define $v(t):=e^{-tA_{\vrh\phi\om}}f$. Then
$v(t)$ belongs to $\dom(A_{\vrh\phi\om})$ and, due to
(\ref{Step1-ineq1}) and the estimates in Step 2, we obtain
\begin{align*}
&\bigl\|\rot v(t)\bigr\|_2^2+\bigl\|\div v(t)\bigr\|_2^2
\leq
\frac1{\min\{\eps_0,1\}}\,\a(v(t),v(t))
\leq
\frac1{(1-\ga)\min\{\eps_0,1\}}\,\Re\a_{\vrh\phi\om}(v(t),v(t))
\\&\quad\leq
\frac1{(1-\ga)\min\{\eps_0,1\}}\,\bigl|(A_{\vrh\phi\om}v(t),v(t))_{L^2(\Om,\C^3)}\bigr|
\leq
\frac1{(1-\ga)\min\{\eps_0,1\}}\,\|A_{\vrh\phi\om}v(t)\|_2\|v(t)\|_2
\\&\quad\leq
\frac1{(1-\ga)\min\{\eps_0,1\}\sin\theta_0}\, t^{-1}\|f\|_2^2\,.
\end{align*}
As the space of test functions $C_c^\infty(\Om,\C^3)$ is dense in
$L^2(\Om,\C^3)$, we conclude that
\begin{align}
\bigl\|\div e^{-tA_{\vrh\phi}}\bigr\|_{2\to2}
\leq
\frac1{\sqrt{(1-\ga)\min\{\eps_0,1\}\sin\theta_0}}\,t^{-1/2}e^{\om\vrh^2t}
\nonumber \intertext{and}
\bigl\|\rot e^{-tA_{\vrh\phi}}\bigr\|_{2\to2}
\leq
\frac1{\sqrt{(1-\ga)\min\{\eps_0,1\}\sin\theta_0}}\,t^{-1/2}e^{\om\vrh^2t}
\label{Step3-rot}
\end{align}
for all $\vrh\in\R$, $\phi\in\E$, $\om>\om_0$, and $t>0$.

In order to obtain weighted norm estimates for $t^{1/2}\rot
e^{-tA_2}$, we have to interchange $\rot\!$ and multiplication by
$e^{-\vrh\phi}$. 
To this end, we represent $e^{-\vrh\phi}\rot h$ in terms of
$\rot(e^{-\vrh\phi}h)$ and apply this representation to
$h:=e^{-tA_2}e^{\vrh\phi}f$. 
By using the product rule for $\rot$ we obtain
\begin{align*}
e^{-\vrh\phi}\rot h
=
\rot(e^{-\vrh\phi}h)+\vrh\grad\phi\times(e^{-\vrh\phi}h)\,.
\end{align*}
The $L^2$-norm of the first term on the right-hand side can be
estimated by (\ref{Step3-rot}), whereas for the second term we use
$\|\grad\phi\|_\infty\leq\sqrt3$, the elementary fact that $|\vrh|
\leq C_\del t^{-1/2}e^{\del\vrh^2t}$ for arbitrary $\del>0$ and some
constant $C_\del>0$ depending only on $\del$, and
(\ref{Step2-SG-weight})
\begin{align*}
\bigl\|e^{-\vrh\phi}\rot e^{-tA_2}e^{\vrh\phi}f\bigr\|_2
&=
\bigl\|e^{-\vrh\phi}\rot h\bigr\|_2
\leq
\bigl\|\rot(e^{-\vrh\phi}h)\bigr\|_2+|\vrh|\,\|\grad\phi\|_\infty\|e^{-\vrh\phi}h\|_2
\\&\leqC
t^{-1/2}e^{(\om+\del)\vrh^2t}\|f\|_2
\end{align*}
which yields
\begin{align*}
\bigl\|e^{-\vrh\phi}t^{1/2}\rot e^{-tA_2}e^{\vrh\phi}\bigr\|_{2\to2}
\leqC
e^{(\om+\del)\vrh^2t}\,.
\end{align*}
By adapting the arguments given in the proof of \cite[Proposition 8.22]{Levico}
we see that 
the family of operators $\{t^{1/2}\rot e^{-tA_2}:\,t>0\}$ satisfies
Davies-Gaffney estimates. A similar reasoning shows that $\{t^{1/2}\div
e^{-tA_2}:\,t>0\}$ enjoys the same property.

\smallskip
{\bf Step 4:} Let $\Om_0$ be a bounded Lipschitz domain in $\R^3$.
In view of Fact \ref{V-H1/2-embedding} and the Sobolev embedding
$H^{1/2}(\Om_0,\C^3)\hookrightarrow L^{p^*}\!(\Om_0,\C^3)$ for
$p^*:=\frac{3\cdot2}{3-1}=3$, we find a constant $C>0$ depending only
on $\partial\Om_0$ and $\mbox{diam}(\Om_0)$ such that, for every
$u\in V(\Om_0)$,
\begin{align}\label{V-Einbettung-Sobolev}
\|u\|_{L^3(\Om_0,\C^3)}
\leq
C\bigl(\|u\|_{L^2(\Om_0,\C^3)}+\|\div u\|_{L^2(\Om_0,\C)}
+
\|\rot u\|_{L^2(\Om_0,\C^3)}\bigr)\,.
\end{align}
With the help of the rescaling procedure used in \cite[p.\
3145]{MiMo}, we get, for all $w\in V(\Om_0)$,
\begin{align}\label{V-Einbettung}
&\|w\|_{L^3(\Om_0,\C^3)}
\leq C
R^{-1/2}\bigl(\|w\|_{L^2(\Om_0,\C^3)}+R\,\|\div w\|_{L^2(\Om_0,\C)}
+
R\,\|\rot w\|_{L^2(\Om_0,\C^3)}\bigr)\,,
\end{align}
where $R:=\mbox{diam}(\Om_0)$ and the constant $C$ depends
exclusively on the Lipschitz character of $\Om_0$.

\smallskip
{\bf Step 5:} The desired generalized Gaussian $(2,3)$-estimates for
$A_2$ follow by combining the Davies-Gaffney estimates from Step 2
and 3 with inequality (\ref{V-Einbettung}). A similar
reasoning had been applied in \cite[Section 5]{MiMo}.

Let $t>0$, $x,y\in\Om$, and let $f\in C_c^\infty(\Om,\C^3)$ with $\supp
f\subset B(y,t^{1/2})$ be arbitrary. Put $\Om_0:=B(x,2t^{1/2})
\subset\Om$ and choose a cut-off function $\eta\in
C_c^\infty(\Om_0,\R)$ such that
\begin{align*}
0\leq\eta\leq1\,,\qquad
\eta=1\quad\mbox{on }B(x,t^{1/2})\,,\qquad\mbox{and}\qquad
\|\grad\eta\|_\infty\leq t^{-1/2}\,.
\end{align*}
First, we remark that
\begin{align*}
\bigl\|\div(\eta e^{-tA_2}f)\bigr\|_{L^2(\Om_0,\C)}
&\leq
\bigl\|\eta\,\div(e^{-tA_2}f)\bigr\|_{L^2(\Om_0,\C)}
+
\bigl\|\grad\eta\cdot e^{-tA_2}f\bigr\|_{L^2(\Om_0,\C)}
\\&\leqC
\bigl\|\div(e^{-tA_2}f)\bigr\|_{L^2(\Om_0,\C)}
+
t^{-1/2}\bigl\|e^{-tA_2}f\bigr\|_{L^2(\Om_0,\C^3)}
\end{align*}
and similarly
\begin{align*}
\bigl\|\rot(\eta e^{-tA_2}f)\bigr\|_{L^2(\Om_0,\C^3)}
&\leqC
\bigl\|\rot(e^{-tA_2}f)\bigr\|_{L^2(\Om_0,\C^3)}
+t^{-1/2}\bigl\|e^{-tA_2}f\bigr\|_{L^2(\Om_0,\C^3)}\,.
\end{align*}
Since $\nu\cdot(\eta e^{-tA_2}f)|_{\partial\Om_0}=0$ and the
Lipschitz character of $\Om_0$ is controlled by that of $\Om$, we
may use (\ref{V-Einbettung}) and arrive at
\begin{align}\label{Step5}
&\bigl\|e^{-tA_2}f\bigr\|_{L^3(B(x,t^{1/2}),\C^3)}
\leq\nonumber
\bigl\|\eta e^{-tA_2}f\bigr\|_{L^3(\Om_0,\C^3)}
\\&\quad\leqC\nonumber
t^{-1/4}\bigl(\bigl\|\eta e^{-tA_2}f\bigr\|_{L^2(\Om_0,\C^3)}
+t^{1/2}\bigl\|\div(\eta e^{-tA_2}f)\bigr\|_{L^2(\Om_0,\C)}
+t^{1/2}\bigl\|\rot(\eta e^{-tA_2}f)\bigr\|_{L^2(\Om_0,\C^3)}\bigr)
\\&\quad\leqC\nonumber
t^{-1/4}\bigl(3\bigl\|e^{-tA_2}f\bigr\|_{L^2(\Om_0,\C^3)}
+t^{1/2}\bigl\|\div(e^{-tA_2}f)\bigr\|_{L^2(\Om_0,\C)}
+t^{1/2}\bigl\|\rot(e^{-tA_2}f)\bigr\|_{L^2(\Om_0,\C^3)}\bigr)
\\&\quad\leqC
t^{-\frac32(\frac12-\frac13)}
\exp\biggl(-b\,\frac{|x-y|^2}{t}\biggr)\|f\|_{L^2(B(y,t^{1/2}),\C^3)}\,,
\end{align}
where the implicit constants are independent of $f,t,x,y$ and the
last inequality is due to the Davies-Gaffney estimates for
$(e^{-tA_2})_{t>0}$, $\{t^{1/2}\div e^{-tA_2}:\,t>0\}$, and
$\{t^{1/2}\rot e^{-tA_2}:\,t>0\}$. Finally, by density, we deduce
generalized Gaussian $(2,3)$-estimates for $A_2$.
\end{proof}

As noted at the beginning of this section, $V(\Om)$ enjoys better
embedding properties if $\Om$ is convex or if its boundary is of
class $C^{1,1}$. In these cases the space $V(\Om)$ continuously
embeds into $H^1(\Om,\C^3)$ which in turn continuously embeds
into $L^6(\Om,\C^3)$. Hence, in this situation one can take the
$L^6(\Om,\C^3)$-norm on the left-hand side of
(\ref{V-Einbettung-Sobolev}). Observe that this automatically gives
the desired exponent of $t$ in (\ref{Step5}) (cf., e.g.\ \cite[proof
of Theorem 3.1]{PeerJFA}) and thus the rescaling argument in Step 4
would not be needed. Summing up, the following statement holds.
\begin{coro}\label{CoroGGE}
In the situation of Theorem \ref{GGE-Hodge} suppose additionally
that the domain $\Om$ is convex or has a $C^{1,1}$-boundary. Then
the operator $A_2$ associated with the form $\a$ satisfies
generalized Gaussian $(6/5,6)$-estimates.
\end{coro}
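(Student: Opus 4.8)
The plan is to run exactly the argument of the proof of Theorem~\ref{GGE-Hodge}, changing only the Sobolev step. As there, it suffices to establish generalized Gaussian $(2,6)$-estimates for $A_2$; the $(6/5,2)$-estimates then follow by self-adjointness and dualization, and the claimed $(6/5,6)$-estimates by composition via the semigroup law $e^{-tA_2}=e^{-\frac{t}{2}A_2}e^{-\frac{t}{2}A_2}$. All of Steps~1--3 of the proof of Theorem~\ref{GGE-Hodge} are completely unaffected by the extra hypothesis on $\Om$: we obtain verbatim the Davies-Gaffney estimates for the three families $(e^{-tA_2})_{t>0}$, $\{t^{1/2}\div e^{-tA_2}:t>0\}$, and $\{t^{1/2}\rot e^{-tA_2}:t>0\}$.

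The modification is confined to Steps~4 and~5. In Step~4, since $\Om$ (hence any ball $\Om_0=B(x,2t^{1/2})\subset\Om$) is convex or has $C^{1,1}$-boundary, the space $V(\Om_0)$ embeds continuously into $H^1(\Om_0,\C^3)$ by \cite[Theorems 2.9 and 2.17]{Amrouche}, and $H^1(\Om_0,\C^3)\hookrightarrow L^6(\Om_0,\C^3)$ by the classical Sobolev embedding in dimension $3$. Composing these gives, for all $u\in V(\Om_0)$,
\begin{align*}
\|u\|_{L^6(\Om_0,\C^3)}\leq C\bigl(\|u\|_{L^2(\Om_0,\C^3)}+\|\div u\|_{L^2(\Om_0,\C)}+\|\rot u\|_{L^2(\Om_0,\C^3)}\bigr),
\end{align*}
and the same rescaling procedure from \cite[p.~3145]{MiMo} used in Step~4 above yields the $R$-dependent version
\begin{align*}
\|w\|_{L^6(\Om_0,\C^3)}\leq C R^{-1/2}\bigl(\|w\|_{L^2(\Om_0,\C^3)}+R\,\|\div w\|_{L^2(\Om_0,\C)}+R\,\|\rot w\|_{L^2(\Om_0,\C^3)}\bigr),
\end{align*}
with $R=\mathrm{diam}(\Om_0)$ and $C$ depending only on the Lipschitz character of $\Om_0$ (controlled by that of $\Om$). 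One must only check that the Sobolev constants behave uniformly under this rescaling; this is exactly the point already handled in \cite{MiMo} and in \cite[proof of Theorem 3.1]{PeerJFA}, and it is the one place where a small amount of care is needed.

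In Step~5 we repeat the cut-off argument verbatim with $L^6$ in place of $L^3$. With $\Om_0=B(x,2t^{1/2})$ and $\eta$ the same cut-off ($\eta=1$ on $B(x,t^{1/2})$, $\|\grad\eta\|_\infty\leq t^{-1/2}$), the product-rule bounds for $\div(\eta e^{-tA_2}f)$ and $\rot(\eta e^{-tA_2}f)$ are unchanged, and applying the new inequality with $R\cong t^{1/2}$ gives
\begin{align*}
\|e^{-tA_2}f\|_{L^6(B(x,t^{1/2}),\C^3)}\leqC t^{-1/4}\bigl(\|e^{-tA_2}f\|_{L^2(\Om_0)}+t^{1/2}\|\div(e^{-tA_2}f)\|_{L^2(\Om_0)}+t^{1/2}\|\rot(e^{-tA_2}f)\|_{L^2(\Om_0)}\bigr).
\end{align*}
Since $\tfrac14=\tfrac32(\tfrac12-\tfrac16)$ and since $\mu(B(x,t^{1/2}))\cong t^{3/2}$, the factor $t^{-1/4}$ is precisely $\mu(B(x,t^{1/2}))^{-(\frac12-\frac16)}$ up to constants, and the Davies-Gaffney estimates for the three families supply the Gaussian factor $\exp(-b|x-y|^2/t)$ when $\supp f\subset B(y,t^{1/2})$. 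By density of $C_c^\infty(\Om,\C^3)$ this proves GGE$(2,6)$ for $A_2$, and the corollary follows. No genuine obstacle arises; the only nonroutine point is the uniformity of constants in the rescaled embedding, which is already available from the cited references.
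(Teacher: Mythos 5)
Your strategy is the right one and is essentially the paper's: keep Steps 1--3 of the proof of Theorem \ref{GGE-Hodge} verbatim and upgrade the Sobolev step from $L^3$ to $L^6$ via $V(\Om_0)\hookrightarrow H^1(\Om_0,\C^3)\hookrightarrow L^6(\Om_0,\C^3)$, then dualize and compose. However, your rescaled embedding carries the wrong power of $R$, and this propagates into an intermediate estimate that is false as stated. The correct $L^6$ analogue of (\ref{V-Einbettung}) is
\[
\|w\|_{L^6(\Om_0,\C^3)}\leq C\,R^{-1}\bigl(\|w\|_{L^2(\Om_0,\C^3)}+R\,\|\div w\|_{L^2(\Om_0,\C)}+R\,\|\rot w\|_{L^2(\Om_0,\C^3)}\bigr),
\]
not $C\,R^{-1/2}(\cdots)$: testing your inequality on $w(x)=w_1(x/R)$ reduces it to $\|w_1\|_{L^6}\leq C\,R^{1/2}\bigl(\|w_1\|_{L^2}+\|\div w_1\|_{L^2}+\|\rot w_1\|_{L^2}\bigr)$, which fails as $R\to0$. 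The exponent $-1/2$ is specific to the $L^3$/$H^{1/2}$ situation of Theorem \ref{GGE-Hodge} and cannot be copied over. Consequently, in your Step 5 the prefactor must be $t^{-1/2}$, not $t^{-1/4}$; note also that $\tfrac32(\tfrac12-\tfrac16)=\tfrac12$, so the identity $\tfrac14=\tfrac32(\tfrac12-\tfrac16)$ you invoke is an arithmetic slip. With the corrected power $R^{-1}$ and $R\cong t^{1/2}$ one obtains exactly the required $t^{-\frac32(\frac12-\frac16)}\cong\mu(B(x,t^{1/2}))^{-(\frac12-\frac16)}$, and the remainder of the argument (product-rule bounds for the cut-off, the Davies-Gaffney factors, density, and the passage from $(2,6)$ to $(6/5,6)$ by dualization and the semigroup law) is unaffected.

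This is a correctable computational error rather than a conceptual gap, but as written both the displayed rescaled inequality and the final exponent are wrong. For comparison, the paper avoids the issue by dispensing with the rescaling altogether: the homogeneous part of the embedding, $\|u\|_{L^6(\R^3)}\leqC\|\grad u\|_{L^2(\R^3)}$, is scale invariant in dimension $3$, so taking the $L^6$-norm on the left-hand side of (\ref{V-Einbettung-Sobolev}) automatically produces the correct exponent of $t$ in (\ref{Step5}) without any $R$-bookkeeping. Your route via rescaling is equivalent once the exponent of $R$ is fixed.
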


Since $A_2$ satisfies generalized Gaussian $(p_0,p_0')$-estimates
for some $p_0\in[1,3/2]$, the semigroup generated by $-A_2$ can be
extended to a bounded analytic semigroup on $L^p(\Om,\C^3)$ for
every $p\in[p_0,p_0']$ with $p\ne\infty$. For the rest of this
section, we denote by $-A_p$ its generator.

\medskip
In order to introduce the Maxwell operator, we first recall some
basic facts concerning the Helmholtz decomposition in
$L^p(\Om,\C^3)$. 
For $p\in(1,\infty)$ define the space of divergence-free vector fields
\begin{align*}
L^p_{\si}(\Om)
:=\bigl\{v\in L^p(\Om,\C^3)\,:\,\div v=0,\,\nu\cdot v|_{\partial\Om}=0\bigr\}
\end{align*}
and the space of gradients
\begin{align*}
G^p(\Om):=\bigl\{\grad g\,:\,g\in W^1_p(\Om,\C)\bigr\}\,.
\end{align*}
Then both are closed subspaces of $L^p(\Om,\C^3)$. In the case $p=2$
the corresponding orthogonal projection $\P_2$ from $L^2(\Om,\C^3)$
onto $L^2_{\si}(\Om)$ is called the {\em Helmholtz projection}.
E.\ Fabes, O.\ Mendez, and M.\ Mitrea established a {\em Helmholtz
decomposition in} $L^p(\Om,\C^3)$ which reads as follows.
\begin{fact} \cite[Theorems 11.1 and 12.2]{FMM}
For every bounded Lipschitz domain $\Om$ in $\R^3$ there exists
$\eps>0$ such that $\P_2$ extends to a bounded linear operator
$\P_p$ from $L^p(\Om,\C^3)$ onto $L^p_{\si}(\Om)$ for all
$p\in(3/2-\eps,3+\eps)$. In this range, one has an {\em
$L^p$-Helmholtz decomposition}
\begin{align}\label{LpHHdecomp}
L^p(\Om,\C^3)=L^p_{\si}(\Om)\oplus G^p(\Om)
\end{align}
as a topological direct sum. The operator $\P_p$ is then called the {\em
$L^p$-Helmholtz projection}.

In the class of bounded Lipschitz domains, this result is sharp in
the sense that, for any $p\notin[3/2,3]$, there is a bounded
Lipschitz domain $\Om\subset\R^3$ for which the $L^p$-Helmholtz
decomposition (\ref{LpHHdecomp}) fails.

If, however, $\Om$ has a regular boundary $\partial\Om\in C^1$, then
the result is even true for all $p\in(1,\infty)$.
\end{fact}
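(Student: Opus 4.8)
The plan is to identify the $L^p$-Helmholtz decomposition with the $L^p$-solvability of a weak Neumann problem for the Laplacian and then to feed in the Lipschitz regularity theory for that problem. Before that I would record the soft structural facts, valid for every $p\in(1,\infty)$: the space $L^p_\si(\Om)$ is closed, being the intersection of the kernels of the continuous maps $\div\colon L^p(\Om,\C^3)\to W^{-1}_p(\Om)$ and $u\mapsto\nu\cdot u|_{\partial\Om}$, while $G^p(\Om)$ is closed because the Ne\v{c}as--Poincar\'e inequality $\|g-|\Om|^{-1}\int_\Om g\,dx\|_{L^p(\Om)}\leqC\|\grad g\|_{L^p(\Om)}$ holds on any bounded Lipschitz domain, so a Cauchy sequence of gradients, after normalizing the mean to zero, converges in $W^1_p(\Om)$. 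Integration by parts gives $\int_\Om w\cdot\ov{\grad g}\,dx=0$ whenever $w\in L^p_\si(\Om)$ and $g\in W^1_{p'}(\Om)$, so $L^p_\si(\Om)$ is the annihilator of $G^{p'}(\Om)$; in particular the decomposition for $p$ is equivalent to the one for $p'$, consistently with the self-dual range $(3/2-\eps,3+\eps)$, since $3/2$ and $3$ are conjugate exponents.

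For the reduction, writing $u=w+\grad\pi$ and taking the distributional divergence and the normal trace forces
\begin{align*}
\int_\Om\grad\pi\cdot\ov{\grad\varphi}\,dx=\int_\Om u\cdot\ov{\grad\varphi}\,dx\qquad\mbox{for all }\varphi\in W^1_{p'}(\Om),
\end{align*}
i.e.\ $\pi$ is a weak solution of the Neumann problem $\Delta\pi=\div u$, $\partial_\nu\pi=\nu\cdot u$. Hence the $L^p$-Helmholtz decomposition holds, with a bounded projection $\P_p$, precisely when this problem admits a solution $\pi\in W^1_p(\Om)$, unique modulo constants, with $\|\grad\pi\|_{L^p(\Om)}\leqC\|u\|_{L^p(\Om)}$: one then sets $\P_pu:=u-\grad\pi$, reads off from the identity above (testing first against $C_c^\infty(\Om)$ and then against $C^\infty(\ov\Om)$) that $\div(\P_pu)=0$ and $\nu\cdot\P_pu|_{\partial\Om}=0$, notes that $\P_p$ fixes $L^p_\si(\Om)$ and equals the identity minus a bounded operator, and sees that it agrees with the $L^2$-orthogonal projection $\P_2$ on $L^2(\Om,\C^3)\cap L^p(\Om,\C^3)$ by uniqueness. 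The case $p=2$ is just Lax--Milgram on $W^1_2(\Om)$ modulo constants, and the directness $L^p_\si(\Om)\cap G^p(\Om)=\{0\}$ is exactly the uniqueness assertion.

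The substance of the theorem is the $L^p$-well-posedness of this Neumann problem on a Lipschitz domain, and this is where the range $(3/2-\eps,3+\eps)$ comes from; $3=\frac{2\cdot3}{3-1}$ is a genuinely three-dimensional threshold. Following Fabes--Mendez--Mitrea, I would first remove the interior inhomogeneity by solving a homogeneous Dirichlet problem $\Delta\phi=\div u$ in $\Om$, $\phi|_{\partial\Om}=0$, with $\|\grad\phi\|_{L^p(\Om)}\leqC\|u\|_{L^p(\Om)}$ (Jerison--Kenig), which reduces matters to a homogeneous Neumann problem with boundary data in $L^p(\partial\Om)$; the latter is solved by the single layer potential once one knows invertibility of $\pm\frac12 I+K^*$ on $L^p(\partial\Om)$ in the relevant range (Verchota for $p=2$, Dahlberg--Kenig for the $L^p$-extension), together with Dahlberg's $A_\infty$ property of harmonic measure. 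Interpolating with the $L^2$ theory and dualizing $p\leftrightarrow p'$ then yields the asserted open range. \textbf{This is the main obstacle}: it rests on the full apparatus of boundary layer potentials on Lipschitz graphs---Rellich identities, $L^p$-boundedness and invertibility of Cauchy-type singular integrals---and the resulting range is essentially optimal.

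For the sharpness one exhibits a domain with a conical point and builds, from an explicit homogeneous harmonic function on the cone, a gradient field that lies in $L^p$ but cannot be decomposed, the obstruction being failure of uniqueness when $p>3$ and failure of existence when $p<3/2$. Finally, when $\partial\Om\in C^1$ the double layer operator is compact on every $L^p(\partial\Om)$ (Fabes--Jodeit--Rivi\`ere), so $\pm\frac12 I+K^*$ is Fredholm of index zero and, being injective, invertible for all $p\in(1,\infty)$; hence the decomposition extends to the full range $(1,\infty)$.
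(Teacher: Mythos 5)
This statement is imported into the paper as a \emph{Fact} and is not proved there at all; it is quoted directly from Fabes--Mendez--Mitrea \cite{FMM}. Your outline correctly reproduces the strategy of that reference: the soft closedness and duality observations, the reduction of the decomposition to $L^p$-well-posedness of the weak Neumann problem $\Delta\pi=\div u$, $\partial_\nu\pi=\nu\cdot u$ with the gradient estimate, and the identification of the hard core as the Lipschitz-domain layer-potential theory (invertibility of $\pm\tfrac12 I+K^*$, Jerison--Kenig, Dahlberg--Kenig), with the $C^1$ improvement via compactness of the double layer \`a la Fabes--Jodeit--Rivi\`ere. Since you explicitly flag the boundary-integral machinery and the sharpness examples as external inputs rather than proving them, your proposal is a faithful sketch of the cited proof rather than a self-contained alternative, which is exactly the status the statement has in the paper.
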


For convenience, we introduce the following abbreviation.
\begin{notation}
We denote by $I_\Om$ the largest subinterval of the real line
containing $2$ such that
for each $p\in I_\Om$ the semigroup $(e^{-tA_2})_{t>0}$ extends to a
bounded analytic semigroup on $L^p(\Om,\C^3)$ and that there exists
an $L^p$-Helmholtz decomposition.
\end{notation}
In view of the foregoing statements, the length of $I_\Om$ is
intimately related to regularity properties of the boundary $\partial\Om$
and the interval $[3/2,3]$ is always contained in $I_\Om$.

As we shall see immediately, the operators $A_2$ and $\P_2$ are
commuting. This relies on the fact that $\P_2$ leaves the domain
$V(\Om)$ of the form $\a$ invariant which is essentially due to the
boundary condition of $V(\Om)$. We remark that this property stands
in contrast to the situation of Dirichlet boundary
conditions ($\nu\cdot v|_{\partial\Om}=0$ and $\nu\times
v|_{\partial\Om}=0$). The latter is implicitly mentioned in
\cite[Chapter 4]{constantin}.

\begin{lemma}\label{HHPro-komm}
For any $p\in I_\Om$, the operator $A_p$ and the Helmholtz
projection $\P_p$ are commuting, i.e.\ $\P_p(\dom(A_p))$ is
contained in $\dom(A_p)$ and it holds, for all $u\in\dom(A_p)$,
$$
\P_pA_pu=A_p\P_pu\,.
$$
\end{lemma}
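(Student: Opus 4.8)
The plan is to establish the commutation relation first on $L^2(\Om,\C^3)$ directly from the form $\a$, and then transfer it to $L^p(\Om,\C^3)$ by consistency and density.

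\emph{Step 1: $\P_2$ leaves the form domain $V(\Om)$ invariant.} Given $u\in V(\Om)$, the $L^2$-Helmholtz decomposition \eqref{LpHHdecomp} (case $p=2$) writes $u=\P_2u+\grad g$ with $g\in W^1_2(\Om,\C)$. Since $\rot\grad g=0$ in the distributional sense, $\rot(\P_2u)=\rot u\in L^2(\Om,\C^3)$; moreover $\div(\P_2u)=0$ and $\nu\cdot(\P_2u)|_{\partial\Om}=0$ by the very definition of $L^2_{\si}(\Om)$, so $\P_2u\in V(\Om)$. The identities $\rot(\P_2u)=\rot u$ and $\div(\P_2u)=0$ moreover give
$$
\a(\P_2u,v)=\int_\Om\eps(\cdot)^{-1}\rot u\cdot\ov{\rot v}\,dx=\a(u,\P_2v)
\qquad(u,v\in V(\Om)).
$$

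\emph{Step 2: $\P_2$ commutes with $A_2$.} Let $u\in\dom(A_2)$ and set $f:=A_2u$. By Step 1 one has $\P_2u\in V(\Om)$ and $\P_2v\in V(\Om)$ for every $v\in V(\Om)$; using self-adjointness of $\P_2$, the defining identity of $A_2$ applied to the test function $\P_2v$, and Step 1, one finds $(\P_2f,v)_{L^2(\Om,\C^3)}=(f,\P_2v)_{L^2(\Om,\C^3)}=\a(u,\P_2v)=\a(\P_2u,v)$ for all $v\in V(\Om)$. Hence $\P_2u\in\dom(A_2)$ with $A_2\P_2u=\P_2f=\P_2A_2u$. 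Consequently, for each $\la>0$ and $g\in L^2(\Om,\C^3)$, applying this with $u=(\la+A_2)^{-1}g$ yields $(\la+A_2)\P_2u=\P_2(\la+A_2)u=\P_2g$, so $\P_2(\la+A_2)^{-1}=(\la+A_2)^{-1}\P_2$ on $L^2(\Om,\C^3)$.

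\emph{Step 3: transfer to $L^p$.} Fix $p\in I_\Om$ and $\la>0$. The subspace $L^2(\Om,\C^3)\cap L^p(\Om,\C^3)$ is dense in $L^p(\Om,\C^3)$ since $\Om$ is bounded, and on it $\P_p$ and $(\la+A_p)^{-1}$ agree with $\P_2$ and $(\la+A_2)^{-1}$; both $\P_2$ and $(\la+A_2)^{-1}$ map $L^2\cap L^p$ into itself (their $L^p$-realisations being $\P_p$ and $(\la+A_p)^{-1}$). Thus the resolvent identity of Step 2 reads $\P_p(\la+A_p)^{-1}g=(\la+A_p)^{-1}\P_pg$ for $g\in L^2\cap L^p$, and by density and boundedness it holds for all $g\in L^p(\Om,\C^3)$. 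Since $\dom(A_p)=\operatorname{ran}\bigl((\la+A_p)^{-1}\bigr)$, writing $u\in\dom(A_p)$ as $u=(\la+A_p)^{-1}g$ with $g=(\la+A_p)u$ gives $\P_pu=(\la+A_p)^{-1}\P_pg\in\dom(A_p)$ and $(\la+A_p)\P_pu=\P_pg=\la\P_pu+\P_pA_pu$, i.e.\ $A_p\P_pu=\P_pA_pu$.

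The genuinely new point, and the only one requiring thought, is Step 1: the invariance $\P_2V(\Om)\subset V(\Om)$ together with $\rot(\P_2u)=\rot u$ and $\div(\P_2u)=0$, which rests on the normal boundary condition built into $V(\Om)$ and on the fact that the $\rot$-part of $\a$ annihilates gradients. Steps 2 and 3 are then the standard form-method argument and a routine consistency/density bookkeeping; the only delicate bit there is checking that $(\la+A_2)^{-1}$ and $\P_2$ preserve $L^2\cap L^p$, which follows from consistency of the $L^2$- and $L^p$-realisations.
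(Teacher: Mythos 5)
Your proposal is correct and follows essentially the same route as the paper: establish $\P_2V(\Om)\subset V(\Om)$ with $\rot(\P_2u)=\rot u$ and $\div(\P_2u)=0$, deduce $\a(\P_2u,v)=\a(u,\P_2v)$ and hence the $L^2$-commutation from the form definition of $A_2$, and then transfer to $L^p$ via commuting resolvents, consistency, and density. The only cosmetic difference is that you spell out the form identity and the resolvent bookkeeping slightly more explicitly than the paper does.
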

\begin{proof}
At first, we treat the case $p=2$. The statement for arbitrary $p\in
I_\Om$ then follows by density and consistency.

We claim that $\P_2\colon V(\Om)\to V(\Om)$. Indeed, let $u\in
V(\Om)$. By definition of $\P_2$, it is evident that $\div(\P_2u)=0$
as well as $\nu\cdot(\P_2u)|_{\partial\Om}=0$. In order to check
$\rot(\P_2u)\in L^2(\Om,\C^3)$, we write $\P_2u=u-\grad g$ for some
$g\in W^1_2(\Om,\C)$ and note that it suffices to show $\rot(\grad
g)=0$. This can be easily verified via the distributional
definitions of $\rot$ and $\grad$ which transfer the assertion to
the level of test functions where it is elementary.
In particular, we have just computed $\rot(\P_2u)=\rot u$ for every
$u\in V(\Om)$. 

Now consider $u\in\dom(A_2)$. We get for each $v\in V(\Om)$
\begin{align*}
(\P_2A_2u,v)_{L^2(\Om,\C^3)}
=
(A_2u,\P_2v)_{L^2(\Om,\C^3)}
=
\a(u,\P_2v)
=
\a(\P_2u,v)\,,
\end{align*}
where the last equality is obtained with the help of
$\rot(\P_2u)=\rot u$. This means that $\P_2u\in\dom(A_2)$
and $\P_2A_2u=A_2\P_2u$.

Let $p\in I_\Om$. Observe that $A_p$ and $\P_p$ are commuting if and
only if resolvents of $A_p$ commute with $\P_p$ on $L^p(\Om,\C^3)$.
In particular, we have seen above that
$\P_p(\la+A_p)^{-1}=(\la+A_p)^{-1}\P_p$ on $L^p(\Om,\C^3)\cap
L^2(\Om,\C^3)$ for any $\la\in\C$ with $\Re\la>0$. Since $-A_2$ as
well as $-A_p$ are generators of bounded analytic semigroups, their
resolvent sets include the right-half complex plane and their
resolvents are consistent. Hence, by the density of
$L^p(\Om,\C^3)\cap L^2(\Om,\C^3)$ in $L^p(\Om,\C^3)$ and by the
boundedness of resolvent operators, the equality
$\P_p(\la+A_p)^{-1}=(\la+A_p)^{-1}\P_p$ extends to the whole space
$L^p(\Om,\C^3)$. This yields the lemma.
\end{proof}

Now we are prepared to introduce the Maxwell operator.

\begin{definition}\label{DefMaxOp}
For $p\in I_\Om$ we define the {\em Maxwell operator} $M_p$ on
$L^p_\sigma(\Om)$ by setting
\begin{align*}
\dom(M_p)&:=\P_p\dom(A_p)=\dom(A_p)\cap L^p_\si(\Om)\,,\\
M_pu&:=A_pu\qquad\mbox{for }u\in\dom(M_p)\,.
\end{align*}
\end{definition}

Since $A_2$ satisfies generalized Gaussian $(3/2,3)$-estimates (cf.\
Theorem \ref{GGE-Hodge}), Theorem \ref{specmultthm} yields the
following result.
\begin{theorem}\label{SpecMultHodgeLaplace}
Let $p\in(3/2,3)$. Suppose that $s>3|1/p-1/2|$. Then, for every
bounded Borel function $F\colon[0,\infty)\to\C$ with
$\sup_{n\in\Z}\|\om F(2^n\cdot)\|_{C^s}<\infty$, the operator
$F(A_2)$ is bounded on $L^p(\Om,\C^3)$ and there exists a constant
$C_p>0$ such that
$$
\|F(A_2)\|_{L^p(\Om,\C^3)\to L^p(\Om,\C^3)}
\leq
C_p\Bigl(\sup_{n\in\Z}\|\om F(2^n\cdot)\|_{C^s}+|F(0)|\Bigr)\,.
$$
\end{theorem}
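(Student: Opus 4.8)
The plan is to deduce this statement as a direct consequence of the abstract spectral multiplier result, Theorem~\ref{specmultthm}, applied to the operator $A_2$. The only preparatory work needed is to verify that all hypotheses of that theorem are in force.

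First I would recall that a bounded Lipschitz domain $\Om$ in $\R^3$, endowed with the Euclidean metric and Lebesgue measure, is a space of homogeneous type with $\mu(B(x,r))\cong r^3$ (as noted in Section~2); hence one may take the dimension to be $D=3$. Next, the operator $A_2$ is non-negative and self-adjoint on $L^2(\Om,\C^3)$, and by Theorem~\ref{GGE-Hodge} it satisfies generalized Gaussian $(3/2,3)$-estimates. Since $3=(3/2)'$, this is precisely GGE$(p_0,p_0')$ with $p_0=3/2\in[1,2)$.

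It then remains to invoke Theorem~\ref{specmultthm} with this choice of $p_0$ and with $D=3$; because we are working on the vector-valued space $L^p(\Om,\C^3)$ rather than a scalar $L^p$, one uses the version of the theorem recorded in Remark~(3) following its statement. For $p\in(p_0,p_0')=(3/2,3)$ the admissible differentiability order is $s>D|1/p-1/2|=3|1/p-1/2|$, which is exactly the hypothesis imposed here, and the theorem then yields both the boundedness of $F(A_2)$ on $L^p(\Om,\C^3)$ and the quantitative estimate
$$
\|F(A_2)\|_{L^p(\Om,\C^3)\to L^p(\Om,\C^3)}\leq C_p\Bigl(\sup_{n\in\Z}\|\om F(2^n\cdot)\|_{C^s}+|F(0)|\Bigr)
$$
with $C_p$ independent of $F$.

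I do not expect any real obstacle at this stage: the entire difficulty has already been absorbed into the proof of Theorem~\ref{GGE-Hodge}, which supplies the generalized Gaussian estimates that feed into the abstract machinery. The only points requiring a moment's care are the identification of the homogeneous dimension $D=3$ for bounded Lipschitz subdomains of $\R^3$ and the fact that Theorem~\ref{specmultthm} transfers verbatim to the $\C^3$-valued setting, both of which are routine.
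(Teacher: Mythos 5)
Your proposal matches the paper exactly: the theorem is stated there as an immediate consequence of Theorem~\ref{specmultthm} applied with $p_0=3/2$ and $D=3$, the hypotheses being supplied by the generalized Gaussian $(3/2,3)$-estimates of Theorem~\ref{GGE-Hodge} and the vector-valued extension noted in Remark~(3). No gap; the identification of the homogeneous dimension and the passage to $\C^3$-valued $L^p$ are handled just as you describe.
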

As $A_p$ and $\P_p$ are commuting, the functional calculus for $A_2$
on $L^p(\Om,\C^3)$ and the Helm\-holtz projection $\P_p$ are
commuting as well. Therefore, we deduce a spectral multiplier
theorem for the Maxwell operator by restricting $F(A_2)$ to the
space of divergence-free vector fields.
\begin{theorem}\label{SpecMultMaxwell}
Let $p\in(3/2,3)$. Suppose that $s>3|1/p-1/2|$. Then, for every
bounded Borel function $F\colon[0,\infty)\to\C$ with
$\sup_{n\in\Z}\|\om F(2^n\cdot)\|_{C^s}<\infty$, the operator
$F(M_2)$ is bounded on $L^p_\si(\Om)$ and there exists a constant
$C_p>0$ such that
$$
\|F(M_2)\|_{L^p_\si(\Om)\to L^p_\si(\Om)}
\leq
C_p\Bigl(\sup_{n\in\Z}\|\om F(2^n\cdot)\|_{C^s}+|F(0)|\Bigr)\,.
$$
\end{theorem}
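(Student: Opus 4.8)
The plan is to derive Theorem \ref{SpecMultMaxwell} from Theorem \ref{SpecMultHodgeLaplace} by restricting the operator $F(A_2)$ to the closed subspace $L^p_\si(\Om)$, using that this restriction coincides with $F(M_2)$. The key structural input is Lemma \ref{HHPro-komm}, which says that $A_p$ and $\P_p$ commute for every $p\in I_\Om$; since $(3/2,3)\subset I_\Om$, this applies in the range we are interested in.

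First I would observe that, by the spectral theorem and the definition of $M_2$ as the restriction of $A_2$ to $L^2_\si(\Om)$, the operator $F(M_2)$ on $L^2_\si(\Om)$ equals the restriction of $F(A_2)$ to $L^2_\si(\Om)$; here one uses that $L^2_\si(\Om)$ reduces $A_2$ (which follows from $\P_2 A_2 = A_2 \P_2$ in the self-adjoint $L^2$-setting) so that the spectral resolution of $A_2$ restricts to that of $M_2$. Next, since $\P_p$ commutes with the resolvents of $A_p$ (again by Lemma \ref{HHPro-komm}), it commutes with $F(A_2)$ acting on $L^p(\Om,\C^3)$: indeed, $F(A_2)$ is built from resolvents of $A_2$ (and $A_2$ is consistent with $A_p$ on $L^p\cap L^2$), and $\P_p$ is bounded on $L^p$, so the commutation passes from $L^p\cap L^2$ to all of $L^p$ by density. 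Consequently $F(A_2)$ leaves $L^p_\si(\Om)=\P_p L^p(\Om,\C^3)$ invariant, and its restriction to $L^p_\si(\Om)$ is a bounded operator there with norm no larger than $\|F(A_2)\|_{L^p(\Om,\C^3)\to L^p(\Om,\C^3)}$.

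It remains to identify this restriction with $F(M_2)$. On the dense subspace $L^p_\si(\Om)\cap L^2_\si(\Om)$ both operators agree: the restriction of $F(A_2)$ agrees with $F(M_2)$ already on $L^2_\si(\Om)$ (first sentence of the previous paragraph), and the $L^p$- and $L^2$-realizations of $F(A_2)$ are consistent, hence so are those of its restriction; by definition the $L^p$-boundedness of $F(M_2)$ means precisely the bounded $L^p_\si(\Om)$-extension of this operator. Combining, $F(M_2)$ is bounded on $L^p_\si(\Om)$ with
\begin{align*}
\|F(M_2)\|_{L^p_\si(\Om)\to L^p_\si(\Om)}
\leq
\|F(A_2)\|_{L^p(\Om,\C^3)\to L^p(\Om,\C^3)}
\leq
C_p\Bigl(\sup_{n\in\Z}\|\om F(2^n\cdot)\|_{C^s}+|F(0)|\Bigr)\,,
\end{align*}
where the last inequality is Theorem \ref{SpecMultHodgeLaplace}; this gives the claim.

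I do not expect a serious obstacle here: everything reduces to the commutation lemma plus a consistency/density argument. The only point requiring a little care is the bookkeeping of consistency — making sure that ``$F(A_2)$ on $L^p$'' is well-defined (which is exactly the content of Theorem \ref{SpecMultHodgeLaplace}), that it is consistent with the $L^2$-functional calculus, and that restricting to the invariant subspace $L^p_\si(\Om)$ genuinely yields the operator that Definition \ref{DefMaxOp} calls $F(M_2)$ rather than some a priori different extension. Since $M_p$ is defined via $\P_p$ applied to $\dom(A_p)$ and $\P_p$ commutes with $A_p$, this identification is automatic, so the argument is essentially a short restriction argument once Lemma \ref{HHPro-komm} is in hand.
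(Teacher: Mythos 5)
Your proposal is correct and follows essentially the same route as the paper: the paper likewise deduces Theorem \ref{SpecMultMaxwell} from Theorem \ref{SpecMultHodgeLaplace} by noting that, since $A_p$ and $\P_p$ commute (Lemma \ref{HHPro-komm}), the functional calculus $F(A_2)$ on $L^p(\Om,\C^3)$ commutes with the Helmholtz projection and hence restricts to $F(M_2)$ on $L^p_\si(\Om)$. Your additional bookkeeping of the consistency and reduction arguments only makes explicit what the paper leaves implicit.
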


\begin{remark}
(1) If $\Om$ is convex or has a $C^{1,1}$-boundary, the assertions of
    Theorems \ref{SpecMultHodgeLaplace} and \ref{SpecMultMaxwell}
    even hold for any $p\in(6/5,6)$ because $A_2$ then satisfies
    generalized Gaussian $(6/5,6)$-estimates (cf.\ Corollary 
    \ref{CoroGGE}).

(2) The situation on the whole space $\Om=\R^3$ is more comfortable
    because no boundary terms occur. In particular, the form $\a$ is 
    better suited concerning partial integration. Note that the range 
    of values $p_0\in[1,2)$ for which our method gives generalized 
    Gaussian $(p_0,p_0')$-estimates for $A_2$ then depends only on the
    regularity of the coefficient matrix $\eps(\cdot)$. In the case of
    smooth coefficients one can even prove pointwise Gaussian 
    estimates for $A_2$.
\end{remark}

\section{The Stokes operator with Hodge boundary conditions}

In this section we show that the spectral multiplier result,
presented in Theorem \ref{specmultthm} above, also holds for the
Stokes operator $A$ with Hodge boundary conditions. Our argument is
based on off-diagonal norm estimates for the resolvents of the
Hodge-Laplacian which were recently established by M.\ Mitrea and
S.\ Monniaux (\cite{MiMo}). We verify that bounds of this type 
entail the validity of generalized Gaussian estimates for the
Hodge-Laplacian so that Theorem \ref{specmultthm} can be applied.
Since $A$ is the restriction of the Hodge-Laplacian to the space of
divergence-free vector fields and the Hodge-Laplacian and the Helmholtz
projection are commuting, we obtain in a similar way as in the
foregoing section a spectral multiplier theorem for the Stokes
operator with Hodge boundary conditions.

\smallskip
Let $\Om\subset\R^3$ be a bounded Lipschitz domain and $V(\Om)$
denote the function space introduced in (\ref{FctSpaceV}). At
first, we recall the definition of the {\em Hodge-Laplacian} $B$
which is the operator associated with the densely defined,
sesquilinear, symmetric form
\begin{align*}
\b(u,v):=\int_\Om\rot u\cdot\ov{\rot v}\,dx+
\int_\Om\div u~\ov{\div v}\,dx\qquad(u,v\in V(\Om)).
\end{align*}
Then $B$ is self-adjoint, invertible, and $-B$ generates an analytic
semigroup on $L^2(\Om,\C^3)$. According to \cite[(3.17) and
(3.18)]{MiMo}, the Hodge-Laplacian $B$ can be characterized by
\begin{align*}
\dom(B)&=\bigl\{u\in V(\Om)\,:\,\rot\rot u\in L^2(\Om,\C^3),\,\div u\in H^1(\Om,\C),\,
\nu\times\rot u|_{\partial\Om}=0\bigr\}\,,\\
Bu&=-\Delta u\qquad\mbox{for }u\in\dom(B)\,.
\end{align*}

\begin{definition}
The {\em Stokes operator $A$ with Hodge boundary conditions} on
$L^2_\si(\Om)$ is defined via $A:=\P_2B$ with the domain
$\dom(A):=\P_2\dom(B)$.
\end{definition}

Starting from norm estimates of annular type on $L^p(\Om,\C^3)$ with
$p=2$ for resolvents of the Hodge-Laplacian $B$, M.\ Mitrea and S.\
Monniaux developed an iterative bootstrap argument (\cite[Lemma
5.1]{MiMo}) that allows to incrementally increase the value of $p$
to $p^*:=\frac32\,p$ (due to Sobolev embeddings), as long
as $p<q_\Om$, where $q_\Om$ denotes the critical index for the
well-posedness of the Poisson type problem for the Hodge-Laplacian
(\cite[(1.9)]{MiMo}). In the present situation of a bounded
Lipschitz domain $\Om$ in $\R^3$, it is known that $q_\Om>3$ (cf.\
\cite{Mitrea-Hodge}). M.\ Mitrea and S.\ Monniaux (\cite[Section
6]{MiMo}) showed that for any $\theta\in(0,\pi)$ there exist
$q\in(3,\infty]$ and constants $b,C>0$ such that for all $j\in\N$,
$x\in\Om$, and $\la\in\C\setminus\{0\}$ with $|\arg\la|<\pi-\theta$
\begin{align}\label{ODResHodge}
\bigl\|\cf_{B(x,|\la|^{-1/2})}\la(\la+B)^{-1}
\cf_{B(x,2^{j+1}|\la|^{-1/2})\setminus B(x,2^{j-1}|\la|^{-1/2})}
\bigr\|_{L^2(\Om,\C^3)\to L^q(\Om,\C^3)}
\leq
C\,|\la|^{\frac 32(\frac12-\frac1q)}\,e^{-b\,2^j}\,.
\end{align}
As we shall see, in the Euclidean setting the validity of those
estimates for resolvent operators ensures generalized Gaussian
$(2,q)$-estimates for the semigroup operators. Since an analytic
semigroup $(e^{-tL})_{t>0}$ and resolvents of its generator $-L$ are
intimately related via integral representations, we obtain a
nearly equivalent formulation of generalized Gaussian estimates if
we replace in the two-ball estimate (\ref{GGE}) the semigroup
operators with resolvent operators of the form $\la(\la+L)^{-1}$ for
$\la\in\rho(-L)$. To be precise, the transfer from resolvent
operators to semigroup operators and vice versa reads as follows.

\begin{lemma}\label{GGE-SG-Res}
Let $\Om\subset\R^D$ be a Borel set, $n\in\N$, and $L$ a
non-negative, self-adjoint operator on $L^2(\Om,\C^n)$. Assume that
$1\leq p\leq2\leq q\leq\infty$ and $m\geq2$ with $D/m(1/p-1/q)<1$.
\begin{compactenum}[\bf a)]
\item
Fix $\theta\in(0,\pi/2)$ and suppose that there exist constants
$b,C>0$ such that for all $x,y\in\Om$ and all
$\la\in\C\setminus\{0\}$ with $|\arg\la|<\pi-\theta$
\begin{align}\label{GGE-SG-Res-a}
&\bigl\|\cf_{B(x,|\la|^{-1/m})}\la(\la+L)^{-1}
\cf_{B(y,|\la|^{-1/m})}\bigr\|_{L^p(\Om,\C^n)\to L^q(\Om,\C^n)}
\leq C\,
|\la|^{\frac Dm(\frac1p-\frac1q)}e^{-b\,|\la|^{1/m}|x-y|}\,.
\end{align}
Then there are constants $b',C'>0$ such that the semigroup
operators satisfy
\begin{align}\label{GGE-SG-Res-b}
\bigl\|\cf_{B(x,t^{1/m})}e^{-tL}\cf_{B(y,t^{1/m})}\bigr\|_{L^p(\Om,\C^n)\to L^q(\Om,\C^n)}
\leq C'\,
t^{-\frac Dm(\frac1p-\frac1q)}\exp\Biggl(
-b'\biggl(\frac{|x-y|}{t^{1/m}}\biggr)^{\frac m{m-1}}\Biggr)
\end{align}
for any $t>0$ and any $x,y\in\Om$.

\item
Suppose that there exist constants $b,C>0$ such that for all $t>0$
and all $x,y\in\Om$
\begin{align*}
\bigl\|\cf_{B(x,t^{1/m})}e^{-tL}\cf_{B(y,t^{1/m})}\bigr\|_{L^p(\Om,\C^n)\to L^q(\Om,\C^n)}
\leq C\,
t^{-\frac Dm(\frac1p-\frac1q)}\exp\Biggl(
-b\biggl(\frac{|x-y|}{t^{1/m}}\biggr)^{\frac m{m-1}}\Biggr)\,.
\end{align*}
Then for any $\theta\in(0,\pi/2)$ there are constants $b',C'>0$ such
that for all $x,y\in\Om$ and all $\la\in\C\setminus\{0\}$ with
$|\arg\la|<\theta$
\begin{align*}
&\bigl\|\cf_{B(x,|\la|^{-1/m})}\la(\la+L)^{-1}
\cf_{B(y,|\la|^{-1/m})}\bigr\|_{L^p(\Om,\C^n)\to L^q(\Om,\C^n)}
\leq C'\,
|\la|^{\frac Dm(\frac1p-\frac1q)}e^{-b'\,|\la|^{1/m}|x-y|}\,.
\end{align*}
\end{compactenum}
\end{lemma}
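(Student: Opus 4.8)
The plan is to pass between the two families of operators using the standard integral identities that link a bounded analytic semigroup with the resolvents of its generator, and then to optimize the resulting Gaussian-type exponent. For part~a), I would start from the subordination-type formula
$$
e^{-tL}=\frac1{2\pi i}\int_{\Ga}e^{t\la}(\la+L)^{-1}\,d\la
=\frac1{2\pi i}\int_{\Ga}\frac{e^{t\la}}{\la}\,\la(\la+L)^{-1}\,d\la,
$$
where $\Ga$ is a contour in the sector $|\arg\la|<\pi-\theta$ (e.g.\ the boundary of a slightly smaller sector, run so that $\Re(t\la)\to-\infty$ at the ends). The issue is that the hypothesis \eqref{GGE-SG-Res-a} only controls $\cf_{B(x,|\la|^{-1/m})}\la(\la+L)^{-1}\cf_{B(y,|\la|^{-1/m})}$ with \emph{balls that shrink as $|\la|$ grows}, whereas in \eqref{GGE-SG-Res-b} the balls have the fixed (for given $t$) radius $t^{1/m}$. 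So the first real step is a \textbf{ball-size reconciliation}: for a fixed $t$, decompose $B(x,t^{1/m})$ and $B(y,t^{1/m})$ into $O((1+|\la|^{1/m}t^{1/m})^{D})$ sub-balls of radius $|\la|^{-1/m}$ (only those that actually meet the respective big ball), apply \eqref{GGE-SG-Res-a} to each pair of sub-balls, sum the geometric-type series $\sum e^{-b|\la|^{1/m}|x'-y'|}$ over centers $x',y'$, and absorb the polynomial factor $(1+|\la|^{1/m}t^{1/m})^{D}$ into the exponential at the cost of shrinking $b$; this yields, for each $\la$ on $\Ga$,
$$
\bigl\|\cf_{B(x,t^{1/m})}\,\la(\la+L)^{-1}\,\cf_{B(y,t^{1/m})}\bigr\|_{p\to q}
\leq C\,|\la|^{\frac Dm(\frac1p-\frac1q)}\,e^{-b\,|\la|^{1/m}|x-y|}
$$
(using $D/m(1/p-1/q)<1$ only to keep the power of $|\la|$ below~$1$, which matters for convergence later).

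The second step is the \textbf{contour estimate}. Insert the last bound into the Cauchy integral, parametrize $\Ga$ by $|\la|=\vrh$, and estimate
$$
\bigl\|\cf_{B(x,t^{1/m})}e^{-tL}\cf_{B(y,t^{1/m})}\bigr\|_{p\to q}
\leqC
\int_0^\infty e^{-c\,t\vrh}\,\vrh^{\frac Dm(\frac1p-\frac1q)-1}\,
e^{-b\,\vrh^{1/m}|x-y|}\,d\vrh.
$$
With $r:=|x-y|$ and $\al:=\frac Dm(1/p-1/q)\in[0,1)$, the integral is of the form $\int_0^\infty e^{-ct\vrh-br^{}\vrh^{1/m}}\vrh^{\al-1}\,d\vrh$. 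Its size is governed by minimizing $ct\vrh+br\vrh^{1/m}$ over $\vrh>0$; the stationary point is at $\vrh_\ast\cong (r/t)^{m/(m-1)}$ when that is $\gtrsim t^{-1}$ (i.e.\ when $r\gtrsim t^{1/m}$), giving exponent $-b'(r/t^{1/m})^{m/(m-1)}$, and for $r\lesssim t^{1/m}$ one just uses the trivial bound with the exponential $\leq 1$. A routine Laplace/splitting argument (split the range at $\vrh_\ast$, bound each piece) then produces
$$
\leqC t^{-\al}\exp\Bigl(-b'\bigl(r/t^{1/m}\bigr)^{m/(m-1)}\Bigr),
$$
which is exactly \eqref{GGE-SG-Res-b}; note the scaling $t^{-\al}$ comes out correctly because $\int_0^\infty e^{-ct\vrh}\vrh^{\al-1}\,d\vrh\cong t^{-\al}$. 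For part~b), I would run the converse direction from the identity $\la(\la+L)^{-1}=\la\int_0^\infty e^{-\la t}e^{-tL}\,dt$ (valid for $\Re\la>0$, and extended to $|\arg\la|<\theta<\pi/2$ by analyticity), plug in the semigroup bound, perform the analogous ball-splitting to convert fixed-radius $t^{1/m}$-balls into the $|\la|^{-1/m}$-balls of the conclusion, and estimate $|\la|\int_0^\infty e^{-(\Re\la)t}t^{-\al}e^{-b(r/t^{1/m})^{m/(m-1)}}\,dt$ by the same Laplace-type optimization, now in the $t$-variable, with $|\la|\cong\Re\la$ since $|\arg\la|<\theta<\pi/2$.

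The main obstacle is the \textbf{ball-size reconciliation together with the summation of the off-diagonal series}: one must check that covering a ball of radius $t^{1/m}$ by balls of radius $|\la|^{-1/m}$ and summing $e^{-b|\la|^{1/m}|x'-y'|}$ over the pairs of centers contributes only a factor polynomial in $|\la|^{1/m}t^{1/m}$ (this is where the Euclidean structure / the volume growth $|B(x,r)|\cong r^D$ is genuinely used), and that this polynomial factor, together with the $\vrh^{\al-1}$ from the scaling, still leaves an absolutely convergent contour integral — which is precisely why the hypothesis $D/m(1/p-1/q)<1$ is imposed. Everything else is a by-now standard Laplace-asymptotics bookkeeping; I would keep constants generic ($b,C$ changing from line to line, as the paper's convention allows) and not track them.
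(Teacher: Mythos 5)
Your overall route (Cauchy contour representation plus direct use of the two-ball resolvent bounds) differs from the paper's, which never reconciles ball sizes at all: it converts the two-ball hypothesis into weighted norm estimates $\|e^{-\vrh\phi}\la(\la+L)^{-1}e^{\vrh\phi}\|_{p\to q}\leqC|\la|^{\frac Dm(\frac1p-\frac1q)}$ for $0\le\vrh\le\beta|\la|^{1/m}$ via the Legendre-transform equivalence of Blunck--Kunstmann, runs the contour integral on a $\vrh$-dependent shifted path $\Ga_{t^{-1},\eta}+\om_\vrh$ with $\om_\vrh\cong\vrh^m$, and only at the very end optimizes over the weight parameter $\vrh$, which is where the exponent $\tfrac m{m-1}$ is produced (as the Legendre transform of $\vrh\mapsto\vrh^mt$). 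That difference is not merely cosmetic, because your version of the key step fails.

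Concretely: the estimate you claim for the contour integral,
\begin{align*}
\int_0^\infty e^{-ct\vrh-br\vrh^{1/m}}\,\vrh^{\al-1}\,d\vrh
\leqC t^{-\al}\exp\Bigl(-b'\bigl(r/t^{1/m}\bigr)^{\frac m{m-1}}\Bigr),
\qquad \al:=\tfrac Dm\bigl(\tfrac1p-\tfrac1q\bigr),
\end{align*}
is false. The function $\vrh\mapsto ct\vrh+br\vrh^{1/m}$ is increasing, so it has no interior stationary point; there is nothing to optimize, and the region near $\vrh=0$ contributes with essentially no decay in $r$ (e.g.\ for $m=2$, $\al=1/2$, $t=1$ the integral equals $2\int_0^\infty e^{-cu^2-bru}\,du\cong r^{-1}$, which is only polynomial in $r$). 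Even after correctly truncating the contour at $|\la|\ge t^{-1}$ (which you must do anyway, both to avoid $\la=0$ and to handle $\al=0$), pulling out the off-diagonal factor at its largest value only yields $t^{-\al}e^{-br/t^{1/m}}$, which is \emph{exponentially weaker} than the target since $\tfrac m{m-1}>1$. To recover the full exponent along your route you need an additional idea: either take an $(x,y,t)$-dependent contour whose arc has radius a \emph{small} multiple $c_0(r/t)^{m/(m-1)}$ of the balance point and verify that the growth $e^{t\Re\la}\le e^{c_0t(r/t)^{m/(m-1)}}$ on the arc is dominated by the decay $e^{-b(c_0)^{1/m}(r/t^{1/m})^{m/(m-1)}}$ (possible only after choosing $c_0$ small in terms of $b$), or self-improve the exponent-$1$ bound by chaining $N\cong(r/t^{1/m})^{m/(m-1)}$ copies of $e^{-(t/N)L}$, or switch to the paper's weighted-estimate formulation. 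A secondary, fixable issue: in your ball-size reconciliation the polynomial factor $(1+|\la|^{1/m}t^{1/m})^D$ cannot be absorbed into $e^{-b|\la|^{1/m}|x-y|}$ when $|x-y|\leqC t^{1/m}$; one should either sum the off-diagonal series with H\"older/Young (which removes the polynomial factor altogether) or note that in that regime the target exponential is $\cong1$. Part b) of your plan is sound in spirit, since there the two exponents in $t$ genuinely compete and the Laplace optimization does apply.
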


In view of (\ref{ODResHodge}) and \cite[Proposition 2.1]{BKLeg},
Lemma \ref{GGE-SG-Res} ensures the validity of generalized Gaussian
$(2,q)$-estimates for the Hodge-Laplacian for some $q\in(3,\infty]$.
Similar as in the previous section, Theorem \ref{specmultthm}
entails the boundedness of spectral multipliers, at first for the
Hodge-Laplacian $B$ and then, by restriction, for the Stokes operator
$A$ with Hodge boundary conditions because the Hodge-Laplacian and
the Helmholtz projection are commuting (cf.\ Lemma \ref{HHPro-komm}
or \cite[Lemma 3.7]{MiMo}). This leads to the following statement.

\begin{theorem}\label{SpecMultStokes}
Assume that (\ref{ODResHodge}) holds for some $q\in(3,\infty]$ and
that there is an $L^q$-Helmholtz decomposition. Fix $p\in(q',q)$ and
take $s>3|1/p-1/2|$. Then, for every bounded Borel function
$F\colon[0,\infty)\to\C$ with $\sup_{n\in\Z}\|\om
F(2^n\cdot)\|_{C^s}<\infty$, the operator $F(A)$ is bounded on
$L^p_\si(\Om)$ and there exists a constant $C>0$ such that
$$
\|F(A)\|_{L^p_\si(\Om)\to L^p_\si(\Om)}
\leq
C\Bigl(\sup_{n\in\Z}\|\om F(2^n\cdot)\|_{C^s}+|F(0)|\Bigr)\,.
$$
\end{theorem}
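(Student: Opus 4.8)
The plan is to follow the same strategy used for the Maxwell operator in the previous section, transferring the resolvent off-diagonal bounds~\eqref{ODResHodge} to the semigroup level and then restricting to divergence-free vector fields. First I would invoke Lemma~\ref{GGE-SG-Res}\,a): the hypothesis~\eqref{ODResHodge} gives, after summing the dyadic annular pieces over $j\in\N$ (which converges because of the factor $e^{-b\,2^j}$ and since $|\arg\la|<\pi-\theta$ covers, in particular, a small sector $|\arg\la|<\theta'$ with $\theta'\in(0,\pi/2)$), two-ball resolvent estimates of the form~\eqref{GGE-SG-Res-a} with $D=3$, $m=2$, $(p,q)=(2,q)$; note that $\frac{D}{m}(\frac1p-\frac1q)=\frac32(\frac12-\frac1q)<1$ since $q>3$, so the lemma applies. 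Hence the Hodge-Laplacian $B$ satisfies generalized Gaussian $(2,q)$-estimates. By self-adjointness of $B$, dualization yields GGE$(q',2)$, and composing with the semigroup law yields GGE$(q',q)$ for $B$; one small point to address is that the off-diagonal ``$e^{-b'\,|x-y|^{m/(m-1)}/t^{1/m}}$'' decay produced by the lemma for $m=2$ is the usual Gaussian $e^{-b'|x-y|^2/t}$, so this is genuinely the form~\eqref{GGE} of a generalized Gaussian estimate on the space of homogeneous type $(\Om,|\cdot|,dx)$ with dimension $D=3$ (using $|B(x,r)|\cong r^3$, or $\cong\min\{r,\mathrm{diam}\,\Om\}^3$, which only improves matters for $r$ large).

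Next, with $p_0:=q'\in[1,3/2)$ and GGE$(p_0,p_0')$ in hand, Theorem~\ref{specmultthm} (in its vector-valued form, Remark following it, part (3)) immediately gives: for $p\in(q',q)$ and $s>3|1/p-1/2|$, every bounded Borel $F$ with $\sup_{n\in\Z}\|\om F(2^n\cdot)\|_{C^s}<\infty$ produces a bounded operator $F(B)$ on $L^p(\Om,\C^3)$ with the stated norm bound. It remains to pass from $B$ to $A=\P_2B$. Here I would argue exactly as in the passage from $A_2$ to $M_2$: by Lemma~\ref{HHPro-komm} (or \cite[Lemma~3.7]{MiMo}) the Helmholtz projection commutes with $B$, hence commutes with every resolvent $(\la+B)^{-1}$, hence with $F(B)$ for bounded Borel $F$ (by the functional-calculus/Stone--Weierstrass approximation, or directly from the spectral theorem on $L^2$, extended to $L^p$ by consistency and the density of $L^p\cap L^2$). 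Since $F(B)$ commutes with $\P_p$ and maps $L^p_\si(\Om)$ into itself, and since on divergence-free fields $B$ and $A$ agree, the restriction $F(B)|_{L^p_\si(\Om)}$ equals $F(A)$; boundedness on $L^p_\si(\Om)$ with the asserted estimate follows from the boundedness on all of $L^p(\Om,\C^3)$ together with the boundedness of $\P_p$ (which exists since an $L^q$-Helmholtz decomposition is assumed, hence also for all intermediate $p\in(q',q)$).

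The main obstacle — and the step deserving the most care — is the first one: verifying cleanly that~\eqref{ODResHodge}, which is stated only for the particular values produced by \cite{MiMo} and only for $|\arg\la|<\pi-\theta$ on dyadic annuli, really yields the precise two-ball hypothesis~\eqref{GGE-SG-Res-a} needed to invoke Lemma~\ref{GGE-SG-Res}. This requires (i) summing the annular estimates in $j$ to obtain a single two-ball bound with exponential spatial decay $e^{-b\,|\la|^{1/2}|x-y|}$, handling the region $|x-y|\lesssim|\la|^{-1/2}$ (the ``near-diagonal'' ball, $j=0$) separately; (ii) checking that the exponent $|\la|^{\frac32(\frac12-\frac1q)}$ on the right of~\eqref{ODResHodge} matches $|\la|^{\frac Dm(\frac1p-\frac1q)}$ with $D=3$, $m=2$, $p=2$; and (iii) recording that, although~\eqref{ODResHodge} is only an $L^2\to L^q$ bound for $\la(\la+B)^{-1}$, this is exactly what part~a) of Lemma~\ref{GGE-SG-Res} takes as input, and that the output GGE is of second order ($m=2$), consistent with the second-order nature of $B$. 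Everything after that — dualization, the semigroup composition, the application of Theorem~\ref{specmultthm}, and the Helmholtz restriction — is routine and parallels the Maxwell case verbatim.
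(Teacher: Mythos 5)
Your proposal is correct and follows essentially the same route as the paper: convert the annular resolvent estimates \eqref{ODResHodge} into the two-ball bounds \eqref{GGE-SG-Res-a} (the paper does this via \cite[Proposition 2.1]{BKLeg}), apply Lemma \ref{GGE-SG-Res}\,a) to obtain generalized Gaussian $(2,q)$-estimates and hence, by duality and composition, $(q',q)$-estimates for the Hodge-Laplacian, then invoke Theorem \ref{specmultthm} and restrict to $L^p_\si(\Om)$ using the commutation of the Hodge-Laplacian with the Helmholtz projection.
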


\begin{proof}[Proof of Lemma \ref{GGE-SG-Res}]
As noted in \cite[pp.\ 934-935]{BK2}, one can assume that
$\Om=\R^D$. Otherwise, instead of an operator $T\colon
L^p(\Om,\C^n)\to L^q(\Om,\C^n)$, one considers the extended operator
$\wt T\colon L^p(\R^D,\C^n)$ $\to L^q(\R^D,\C^n)$ defined by
$$
\wt Tu(x):=\left\{
\begin{array}{cl}
T(\cf_\Om u)(x)&\mbox{for }x\in\Om\\
0&\mbox{for }x\notin\Om
\end{array}
\right.\qquad(u\in L^p(\R^D,\C^n),x\in\R^D).
$$
Then it is straightforward to check that $\|\wt
T\|_{L^p(\R^D,\C^n)\to L^q(\R^D,\C^n)}=\|T\|_{L^p(\Om,\C^n)\to
L^q(\Om,\C^n)}$. In the following, we will shortly write
$\|\cdot\|_{p\to q}$ for the norm $\|\cdot\|_{L^p(\R^D,\C^n)\to
L^q(\R^D,\C^n)}$.

For the proof of part a), fix $t>0$ and $x,y\in\R^D$. In order to
verify (\ref{GGE-SG-Res-b}), we use weighted norm estimates for the
resolvent operators similar to those of Davies' perturbation method
presented in the previous section and an integral representation for
the semigroup operators based on the Cauchy formula.

Put $h\colon\R\to\R,\,h(\tau):=\beta\tau$ for some positive constant
$\beta$. Then one gets for the Legendre transform
$h^\#\colon\R\to[-h(0),\infty]$ of $h$
\begin{align}\label{Legendre}
h^\#(\si) := \sup_{\tau\geq0}\,\bigl(\si\tau-h(\tau)\bigr)
=\sup_{\tau\geq0}\,(\si-\beta)\tau = \left\{
\begin{array}{cl}
0&\mbox{for }\si\leq\beta\,,\\
\infty&\mbox{for }\si>\beta\,.
\end{array}
\right.
\end{align}

As before, $\E$ denotes the space of all real-valued functions
$\phi\in C_c^\infty(\R^D)$ with $\|\partial_j\phi\|_\infty\leq1$ for
any $j\in\{1,2,\ldots,D\}$. Then $d_\E(x,y) :=
\sup\{\phi(x)-\phi(y):\,\phi\in\E\}$ defines a metric on $\R^D$
which is actually equivalent to the Euclidean distance (cf., e.g.\
\cite[Lemma 4]{D95}). Therefore, \cite[Theorem 1.2]{BKLeg} is
applicable and gives that (\ref{GGE-SG-Res-a}) is equivalent to
$$
\bigl\|e^{-\vrh\phi}v^{\frac1p-\frac1q}_{|\la|^{-1/m}}
\la(\la+L)^{-1}e^{\vrh\phi}\bigr\|_{p\to q}
\leqC
e^{h^\#(\vrh|\la|^{-1/m})}\,,
$$
where $v_{|\la|^{-1/m}}(x):=|B(x,|\la|^{-1/m})|\cong|\la|^{-\frac
Dm}$, and consequently
$$
\bigl\|e^{-\vrh\phi}\la(\la+L)^{-1}e^{\vrh\phi}\bigr\|_{p\to q}
\leqC
|\la|^{\frac Dm(\frac1p-\frac1q)}e^{h^\#(\vrh|\la|^{-1/m})}
$$
for any $\la\in\C\setminus\{0\}$ with $|\arg\la|<\pi-\theta$,
$\vrh\geq0$, and any $\phi\in\E$. By exploiting (\ref{Legendre}), we
have for any $\la\in\C\setminus\{0\}$ with $|\arg\la|<\pi-\theta$,
$0\leq\vrh\leq\beta|\la|^{1/m}$, and $\phi\in\E$
\begin{align}\label{GGE-SG-Res-weight}
\bigl\|e^{-\vrh\phi}\la(\la+L)^{-1}e^{\vrh\phi}\bigr\|_{p\to q}
\leqC|\la|^{\frac Dm(\frac1p-\frac1q)}\,.
\end{align}
Based on the Cauchy integral formula, one can represent the
semigroup operator $e^{-tL}$ in terms of resolvent operators
\begin{align*}
e^{-tL}=\frac1{2\pi i}\int_{\Ga}e^{t\la}(\la+L)^{-1}\,d\la\,,
\end{align*}
where $\Ga$ is, as usual, a piecewise smooth curve in
$\Si_{\pi-\theta}$ going from $\infty e^{-i(\pi-\theta')}$ to
$\infty e^{i(\pi-\theta')}$ for some $\theta'\in(\theta,\pi/2)$.
Define $\eta:=\frac12(\pi-\theta+\frac\pi2)=\frac34\pi-\frac\theta2$
and $\om_\vrh:=|\sin\eta|^{-1}\,\beta^{-m}\vrh^m$ for $\vrh\geq0$
with $\beta$ being the constant in the definition of the function
$h$. We consider shifted versions of $e^{-tL}$ and shall establish a
bound on $\|e^{-\vrh\phi}e^{-\om_\vrh t}e^{-tL}e^{\vrh\phi}\|_{p\to
q} $ for any $\vrh\geq0$ and $\phi\in\E$ by using the above integral
representation for $e^{-tL}$ with the counterclockwise oriented
integration path $\Ga=\Ga_{t^{-1},\eta}+\om_\vrh$, where
$$
\Ga_{t^{-1},\eta}:=-(-\infty,-t^{-1}]e^{-i\eta}\,\cup\,t^{-1}
e^{i[-\eta,\eta]} \,\cup\,[t^{-1},\infty)e^{i\eta}\,.
$$
It holds for each $\vrh\geq0$ and $\phi\in\E$
\begin{align*}
&\bigl\|e^{-\vrh\phi}e^{-\om_\vrh t}e^{-tL}e^{\vrh\phi}
\bigr\|_{p\to q}
\leq
\int_{\Ga_{t^{-1},\eta}+\om_\vrh}e^{t(\Re\la-\om_\vrh)}
\bigl\|e^{-\vrh\phi}(\la+L)^{-1}e^{\vrh\phi}\bigr\|_{p\to q}
\,|d\la|
\\&\quad=
\int_{\Ga_{t^{-1},\eta}}\frac{e^{t\Re\zeta}}{|\zeta+\om_\vrh|}
\,\bigl\|e^{-\vrh\phi}(\zeta+\om_\vrh)(\zeta+\om_\vrh+L)^{-1}
e^{\vrh\phi}\bigr\|_{p\to q} \,|d\zeta|\,. \intertext{For every
$\zeta\in\Ga_{t^{-1},\eta}$ we can bound the operator norm with the
help of (\ref{GGE-SG-Res-weight}) when the condition
$\vrh\leq\beta\,|\zeta+\om_\vrh|^{1/m}$ is valid. A simple geometric
argument gives that $|\zeta+\om_\vrh|\geq|\sin\eta|\, \om_\vrh$ and
thus (\ref{GGE-SG-Res-weight}) surely applies for
$\vrh\leq\beta\,|\sin\eta|^{1/m}\,\om_\vrh^{1/m}$. But, due to the
definition of $\om_\vrh$, this requirement imposes no restrictions
on $\vrh$. Therefore, we can continue our estimation by applying
(\ref{GGE-SG-Res-weight}) and the elementary fact
$|\zeta+\om_\vrh|\cong|\zeta|+\om_\vrh$}
&\quad\leqC
\int_{\Ga_{t^{-1},\eta}}\frac{e^{t\Re\zeta}}{|\zeta|+\om_\vrh}\,
(|\zeta|+\om_\vrh)^{\frac Dm(\frac1p-\frac1q)}\,|d\zeta|
\leq
\int_{\Ga_{t^{-1},\eta}}{e^{t\Re\zeta}}\,
{|\zeta|}^{\frac Dm(\frac1p-\frac1q)-1}\,|d\zeta|\,.
\end{align*}
Here, we made use of the condition $D/m(1/p-1/q)<1$. Next, we
estimate the integral on each of the three segments of the
integration path $\Ga_{t^{-1},\eta}$ separately. We begin with a
bound for the integral on the half ray $[t^{-1},\infty)e^{i\eta}$
\begin{align*}
&\int_{[t^{-1},\infty)e^{i\eta}}e^{t\Re\zeta}\,
|\zeta|^{\frac Dm(\frac1p-\frac1q)-1}\,|d\zeta|
=
\int_{t^{-1}}^\infty e^{tu\cos\eta}\,u^{\frac Dm(\frac1p-\frac1q)-1}\,du
\\&\quad=
t^{-\frac Dm(\frac1p-\frac1q)}\int_{1}^\infty
e^{v\cos\eta}\,v^{\frac Dm(\frac1p-\frac1q)-1}\,dv
\leqC t^{-\frac Dm(\frac1p-\frac1q)}\,,
\end{align*}
where the last step is due to $\cos\eta<0$. The integral on the half
ray $-(-\infty,-t^{-1}]e^{-i\eta}$ can be treated in the same
manner. A bound for the remaining integral over the circular arc
$t^{-1}e^{i[-\eta,\eta]}$ is obtained by using the canonical
parametrization $\zeta(\al)=t^{-1}e^{i\al}$ for $\al\in[-\eta,\eta]$
\begin{align*}
\int_{t^{-1}e^{i[-\eta,\eta]}} e^{t\Re\zeta}\,
|\zeta|^{\frac Dm(\frac1p-\frac1q)-1} \,|d\zeta|
=
t^{-\frac Dm(\frac1p-\frac1q)}\int_{-\eta}^\eta e^{\cos\al}\,d\al
\leqC
t^{-\frac Dm(\frac1p-\frac1q)}\,.
\end{align*}
Putting things together, we have shown that for all $\vrh\geq0$,
$\phi\in\E$, and $t>0$
\begin{align*}
\bigl\|e^{-\vrh\phi}e^{-\om_\vrh t}e^{-tL}e^{\vrh\phi}\bigr\|_{p\to
q} \leqC t^{-\frac Dm(\frac1p-\frac1q)}
\end{align*}
and after recalling $\om_\vrh=|\sin\eta|^{-1}\,\beta^{-m}\vrh^m$
\begin{align*}
\bigl\|e^{-\vrh\phi}e^{-tL}e^{\vrh\phi}\bigr\|_{p\to q} \leqC
t^{-\frac Dm(\frac1p-\frac1q)}e^{|\sin\eta|^{-1}\,\beta^{-m}\vrh^m t}\,.
\end{align*}
By similar arguments as in the proof of \cite[Proposition 
8.22]{Levico}, this entails the desired two-ball estimate 
(\ref{GGE-SG-Res-b}).

\smallskip
The proof of part b) is similar to that of a) and is therefore
omitted.
\end{proof}

\section{The Lam\'e system}

Recently, M.\ Mitrea and S.\ Monniaux (\cite{MiMoLame}) studied 
properties of the Lam\'e system which appears in the linearization
of the compressible Navier-Stokes equations. They showed analyticity
of the semigroup generated by the Lam\'e operator and maximal
regularity for the time-dependent Lam\'e system equipped with
homogeneous Dirichlet boundary conditions. Their approach is
essentially based on off-diagonal estimates for the resolvents of
the Lam\'e operator. But according to Lemma \ref{GGE-SG-Res}, the
latter are basically equivalent to generalized Gaussian estimates
and this leads to further consequences for the Lam\'e system.

\smallskip
At first, we describe the setting of \cite{MiMoLame}. Although our
results apply in the general frame\-work of \cite{MiMoLame} as well,
we will restrict ourselves to the three-dimensional case. This
restriction serves only to introduce less notation. Furthermore, we
consider complex-valued functions. Let $\Om$ be a bounded, open
subset of $\R^3$ such that the {\em interior ball condition} holds,
i.e.\ there exists a positive constant $c$ such that for all
$x\in\Om$ and all $r\in(0,\frac12\,\mbox{diam}(\Om))$
$$
|B(x,r)|\geq cr^3\,.
$$
This condition ensures that $\Om$ becomes a space of homogeneous
type when $\Om$ is equipped with the three-dimensional Lebesgue
measure and the Euclidean distance. For example, any bounded
Lipschitz domain in $\R^3$ or domains satisfying an interior
corkscrew condition enjoy the interior ball condition.

Fix $\mu,\mu'\in\R$ with $\mu>0$ and $\mu+\mu'>0$. We consider
the sesquilinear form $\c$ defined by
$$
\c(u,v):=\mu\int_\Om\rot u\cdot\ov{\rot v}\,dx
+(\mu+\mu')\int_\Om\div u\,\,\ov{\div v}\,dx
$$
for $u,v\in H_0^1(\Om,\C^3)$, where $H_0^1(\Om,\C^3)$ denotes the
closure of the test function space $C_c^\infty(\Om,\C^3)$ with
respect to the norm of the Sobolev space $H^1(\Om,\C^3)$. Then it is
easy to see that the form $\c$ is closed, continuous, symmetric, and
coercive. Therefore, the operator $L$ associated with the form $\c$
is self-adjoint on $L^2(\Om,\C^3)$ and $-L$ generates a bounded
analytic semigroup on $L^2(\Om,\C^3)$. In \cite[Section
1.1]{MiMoLame} it is checked that $L$ is given by
\begin{align*}
\dom(L)&=\bigl\{u\in H_0^1(\Om,\C^3)\,:\,\mu\Delta u+\mu'\grad\div
u\in L^2(\Om,\C^3)\bigr\}\,,\\
Lu&=-\mu\Delta u-\mu'\grad\div u\qquad\mbox{for }u\in\dom(L)\,.
\end{align*}
The operator $L$ is called {\em Lam\'e operator with Dirichlet
boundary conditions}. In \cite[Section 2]{MiMoLame} M.\ Mitrea and
S.\ Monniaux adapt their approach of \cite{MiMo} to the Lam\'e
operator $L$ and establish the following statement: For any fixed
angle $\theta\in(0,\pi)$ there exist $q\in(2,\infty]$ and constants
$b,C>0$ such that for all $j\in\N$, $x\in\Om$, and
$\la\in\C\setminus\{0\}$ with $|\arg\la|<\pi-\theta$
\begin{align}\label{ODResLame}
\bigl\|\cf_{B(x,|\la|^{-1/2})}\la(\la+L)^{-1}
\cf_{B(x,2^{j+1}|\la|^{-1/2})\setminus B(x,2^{j-1}|\la|^{-1/2})}
\bigr\|_{2\to q}
\leq
C\,|\la|^{\frac 32(\frac12-\frac1q)}\,e^{-b\,2^j}\,.
\end{align}
Since this guarantees the validity of (\ref{GGE-SG-Res-a}), Lemma
\ref{GGE-SG-Res} yields generalized Gaussian $(2,q)$-estimates for
the Lam\'e operator $L$.

As remarked in \cite[Remark 1.5]{MiMoLame}, the estimate
(\ref{ODResLame}) is always valid for $q=6$ due to the
Sobolev embedding $H^1(\Om,\C^3)\hookrightarrow L^6(\Om,\C^3)$. If
the Poisson problem for the Lam\'e operator (cf.\
\cite[(1.15)]{MiMoLame}) is well-posed in $L^6(\Om,\C^3)$, then,
according to \cite[Lemma 2.2]{MiMoLame}, (\ref{ODResLame}) also
holds for $q^*=\infty $. It turns out that the largest value
$q_0\in(2,\infty]$, for which the iterative method of M.\ Mitrea and
S.\ Monniaux delivers (\ref{ODResLame}) and thus generalized
Gaussian $(2,q_0)$-estimates for $L$, depends on the well-posedness
of the Poisson problem for the Lam\'e operator and this is deeply
connected to the regularity properties of the boundary
$\partial\Om$. Only for certain domains $\Om$ the exact
characterization of $q_0$ is known. We refer to \cite[Theorem
4.1]{MiMoLame} for a discussion of this topic and only mention that,
if $\Om$ is a bounded Lipschitz domain in $\R^3$, then one can even
prove (\ref{ODResLame}) for $q=\infty$ (cf.\ \cite[Remark
1.6]{MiMoLame}), i.e.\ $L$ actually satisfies pointwise Gaussian estimates.
However, in general, (\ref{ODResLame}) with $q=\infty$ does not
hold. All in all, the Lam\'e operator $L$ fulfills generalized
Gaussian $(q_0',q_0)$-estimates for some $q_0\in[6,\infty]$.
Therefore, Theorem \ref{specmultthm} applies for $L$ and gives the
following result.
\begin{theorem}\label{SpecMultLame}
Fix $p\in(q_0',q_0)$. Suppose that $s>3|1/p-1/2|$. Then, for every
bounded Borel function $F\colon[0,\infty)\to\C$ with
$\sup_{n\in\Z}\| \om F(2^n\cdot)\|_{C^s}<\infty$, the operator
$F(L)$ is bounded on $L^p(\Om,\C^3)$ and there exists a constant
$C>0$ such that
$$
\|F(L)\|_{L^p(\Om,\C^3)\to L^p(\Om,\C^3)}\leq C\Bigl(\sup_{n\in\Z}\|\om
F(2^n\cdot)\|_{C^s}+|F(0)|\Bigr)\,.
$$
\end{theorem}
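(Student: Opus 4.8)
The plan is to apply Theorem \ref{specmultthm} directly to the Lam\'e operator $L$, once we have checked that all of its hypotheses are met. The crucial input is that $L$ satisfies generalized Gaussian $(q_0',q_0)$-estimates for some $q_0\in[6,\infty]$; this is precisely what the discussion preceding the theorem establishes, by combining the off-diagonal resolvent bound \eqref{ODResLame} (proved by Mitrea and Monniaux via their iterative bootstrap) with the resolvent-to-semigroup transfer of Lemma \ref{GGE-SG-Res}. One should verify that Lemma \ref{GGE-SG-Res} a) is genuinely applicable here: the operator acts on $L^2(\Om,\C^3)$, so $n=3$; we take $m=2$; and since $p=q_0'\le 2\le q_0=q$ with $D=3$, the condition $\frac Dm(\frac1p-\frac1q)<1$ reads $\frac32(\frac1{q_0'}-\frac1{q_0})=\frac32(1-\frac2{q_0})<1$, which holds exactly when $q_0<6$\,---\,wait, this is an equality case when $q_0=6$, so a small remark is needed. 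Actually for $q_0=6$ the exponent equals $1$, and for $q_0>6$ it is strictly less than $1$; one must observe that in the borderline case the estimate \eqref{ODResHodge}/\eqref{ODResLame} is for annular regions with the geometric factor $e^{-b2^j}$, and \cite[Proposition 2.1]{BKLeg} together with the reformulation already handles this, so GGE$(q_0',q_0)$ still follows. (Alternatively, since $q_0\ge 6$ always, one may simply pass to a slightly smaller $q_1\in(2,6)$-close-to-$6$... no: $q_0\ge6$ means we want the \emph{largest} interval; the cleanest route is to note that for any $q_1\in[6,q_0)$ one trivially has GGE$(q_1',q_1)$ and then take $p\in(q_0',q_0)$, choosing $q_1$ with $p\in(q_1',q_1)$.)

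Concretely, the steps I would carry out are: (i) recall from the preceding paragraphs that $L$ is non-negative and self-adjoint on $L^2(\Om,\C^3)$ and that $\Om$, satisfying the interior ball condition, is a space of homogeneous type with dimension $D=3$ (so $\mu(B(x,r))\cong r^3$ for the relevant range of $r$); (ii) invoke \eqref{ODResLame} and Lemma \ref{GGE-SG-Res} (with Remark (3) after Theorem \ref{specmultthm} covering the $\C^3$-valued setting) to obtain generalized Gaussian $(q_0',q_0)$-estimates for $L$ with $q_0\in[6,\infty]$; (iii) fix $p\in(q_0',q_0)$ and $s>3|1/p-1/2|=D|1/p-1/2|$, pick $q_1\in[6,q_0]$ with $p\in(q_1',q_1)$ if one wants to stay in the strict-inequality regime of Lemma \ref{GGE-SG-Res}, and apply Theorem \ref{specmultthm} with $p_0=q_1'$; (iv) conclude that $F(L)$ is bounded on $L^p(\Om,\C^3)$ with the stated norm bound, for every bounded Borel $F$ with $\sup_{n\in\Z}\|\om F(2^n\cdot)\|_{C^s}<\infty$.

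The main obstacle is not really an obstacle but a bookkeeping point: one must be careful that the chain \eqref{ODResLame} $\Rightarrow$ GGE$(2,q_0)$ $\Rightarrow$ (by self-adjointness and duality, as in the proof of Theorem \ref{GGE-Hodge}) GGE$(q_0',q_0)$ goes through, i.e.\ that $L$ being self-adjoint lets us dualize GGE$(2,q_0)$ to GGE$(q_0',2)$ and then compose using the semigroup law to get GGE$(q_0',q_0)$; and that the dimension-type restriction $D/m(1/p-1/q)<1$ in Lemma \ref{GGE-SG-Res} is compatible with $q_0=6$ (handled, as noted, by the annular form of the hypothesis). Everything else is a direct citation: the heavy analytic work\,---\,the iterative Sobolev bootstrap producing \eqref{ODResLame} and the resolvent$\leftrightarrow$semigroup equivalence\,---\,is already done in \cite{MiMoLame}, Lemma \ref{GGE-SG-Res}, and \cite{KunUhl}. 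Thus the proof is genuinely short: it merely assembles Lemma \ref{GGE-SG-Res}, the cited resolvent estimate, and Theorem \ref{specmultthm}.
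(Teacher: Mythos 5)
Your argument is correct and follows the paper's own route: combine the Mitrea--Monniaux resolvent bound \eqref{ODResLame} with Lemma \ref{GGE-SG-Res} to obtain generalized Gaussian $(2,q_0)$-estimates, upgrade to $(q_0',q_0)$ by self-adjointness, duality and the semigroup law, and then apply Theorem \ref{specmultthm}. Your worry about the condition $\frac Dm(\frac1p-\frac1q)<1$ at $q_0=6$ is a false alarm: Lemma \ref{GGE-SG-Res} is applied with $(p,q)=(2,q_0)$ rather than $(q_0',q_0)$, so the condition reads $\frac32\bigl(\frac12-\frac1{q_0}\bigr)\leq\frac34<1$ for every $q_0\in[6,\infty]$, and the subsequent dualization and composition take place at the semigroup level, where no such restriction enters.
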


\end{document}